\newcommand{\N}{{\mathbb N}}                   
\newcommand{\Z}{{\mathbb Z}}                   
\newcommand{\R}{{\mathbf R}}                   
\newcommand{\Q}{{\mathbb Q}}                   
\newcommand{\C}{{\mathbb C}}                   
\renewcommand{\H}{{\mathbf H}}                   
\newcommand{\Mod}{{\mathcal M}}               
\renewcommand{\O}{{\mathcal O}}
\newcommand{\CP}[1]{\mathbb{C}P^{#1}}      
\newcommand{\E}{{\mathcal E}}
\newcommand{\blue}[1]{#1}
\DeclareMathOperator{\Ker}{Ker}
\DeclareMathOperator{\Coker}{Coker}
\DeclareMathOperator{\im}{Im}
\DeclareMathOperator{\Gr}{Gr}
\DeclareMathOperator{\Dol}{Dol}
\DeclareMathOperator{\Betti}{B}
\DeclareMathOperator{\dR}{dR}
\DeclareMathOperator{\GM}{GM}
\DeclareMathOperator{\reg}{reg}
\DeclareMathOperator{\degen}{deg}
\DeclareMathOperator{\Res}{Res}
\DeclareMathOperator{\ad}{ad}
\newtheorem{lem}{Lemma}
\newtheorem{theorem}{Theorem}
\newtheorem{prop}{Proposition}
\newtheorem{remark}{Remark}
\newtheorem{condition}{Condition}
\title[$P = W$ for Painlev\'e spaces]{Perversity equals weight for Painlev\'e spaces}
\author{Szil\'ard Szab\'o}
\address{Budapest University of Technology and Economics, 1111. Budapest,
Egry J\'ozsef utca 1. H \'ep\"ulet, Hungary, and 
R\'enyi Institute of Mathematics, 1053. Budapest, Re\'altanoda
utca 13-15. Hungary}
\email{szabosz@math.bme.hu, szabo.szilard@renyi.mta.hu}
\begin{document}

\begin{abstract}
We provide further evidence to the $P=W$ conjecture of de Cataldo, Hausel and Migliorini, by checking it in the Painlev\'e cases. 
Namely, we compare the perverse Leray filtration induced by the Hitchin map on the cohomology spaces of the Dolbeault moduli space 
and the weight filtration on the cohomology spaces of the irregular character variety corresponding to each of the Painlev\'e $I-VI$ systems. 
We find that the two filtrations agree. 
Along the way, we prove the Geometric $P=W$ conjecture of Katzarkov, Noll, Pandit and Simpson in the Painlev\'e cases, and show that 
in these cases the Geometric $P=W$ conjecture implies the $P=W$ conjecture. 
\end{abstract}

\maketitle

\section*{Introduction and statement of main result}
Throughout the paper we let $X$ denote one of the symbols 
$$
  I, II, III(D6), III(D7), III(D8), IV, V_{\degen}, V, VI
$$
so that the index $PX$ refers to Painlev\'e $X$. 
In \cite{PS}, irregular Betti moduli spaces (also called wild character varieties following \cite{Boalch2}) $\Mod_{\Betti}^{PX}$ are defined and shown to be $\C$-analytically isomorphic 
under the Riemann--Hilbert correspondence to irregular de Rham spaces $\Mod_{\dR}^{PX}$. 
(\blue{At a higher level of generality, moduli spaces of untwisted irregular connections of arbitrary rank on a compact Riemann surface of arbitrary genus were constructed in \cite{Boalch1} 
as algebraic symplectic manifolds, and the irregular Riemann--Hilbert correspondence for the moduli spaces was proven in \cite{Boalch3}, building on the 
categorical correspondence of Malgrange \cite[Chapitre~4]{Mal}.}) 
It follows from \cite[Theorem~1]{BB} that for every $X$ in the so-called untwisted cases $II, III(D6), IV, V, VI$ smooth complex analytic moduli spaces 
$\Mod_{\Dol}^{PX}$ exist and are diffeomorphic under non-abelian Hodge theory to the corresponding $\Mod_{\dR}^{PX}$. 
A combination of these results implies that in the untwisted cases $\Mod_{\Betti}^{PX}$ and $\Mod_{\Dol}^{PX}$ are diffeomorphic; 
such a diffeomorphism is expected to exist in the remaining (twisted) cases too. 
In \cite{ISS1}, \cite{ISS2} we gave an explicit description of the spaces $\Mod_{\Dol}^{PX}$ and of their (irregular) Hitchin map in terms of elliptic pencils. 
In \cite{PS}, an explicit description of the spaces $\Mod_{\Betti}^{PX}$ is provided as affine cubic surfaces. 

Deligne \cite{DelHodge2} constructs a weight filtration $W$ on the complex cohomology spaces of an affine algebraic variety. 
In particular, the cohomology spaces of $\Mod_{\Betti}^{PX}$ carry a mixed Hodge structure. 
On the other hand, the Hitchin map endows the complex cohomology spaces of $\Mod_{\Dol}^{PX}$ with a perverse Leray filtration $P$ \cite{BBD}. 
Following \cite[page 2]{HMW}, we set 
\begin{align*}
   PH^{PX}(q,t) & = \sum_{i,k} \dim_{\Q} \Gr^P_i H^k (\Mod_{\Dol}^{PX}, \Q ) q^i t^k, \\
   WH^{PX}(q,t) & = \sum_{i,k} \dim_{\C} \Gr^W_{2i} H^k (\Mod_{\Betti}^{PX}, \C ) q^i t^k.
\end{align*}
Remarkably, in the rank $2$ case without (regular or irregular) singularities equality between these two polynomials for Dolbeault and Betti spaces 
corresponding to each other under non-abelian Hodge theory and Riemann--Hilbert correspondence was 
proven in  \cite[Theorem~1.0.1]{HdCM}, and conjectured to be the case in general (the $P=W$ conjecture). 

The perverse filtration for some logarithmic Hitchin systems was studied by Z.\;Zhang \cite{Zhang}, where he showed multiplicativity of the filtration with respect to wedge product on 
Hilbert schemes of smooth projective surfaces fibered over a curve, and thereby computed their perverse polynomials. 
More generally, W.\;Chuang, D.\;Diaconescu, R.\;Donagi and T.\;Pantev conjectured a formula for the perverse Hodge polynomial of moduli spaces of meromorphic Higgs bundles with one irregular singularity \cite{CDDP}.
On the Betti side, T.\;Hausel, M.\;Mereb and M.\;Wong investigated the weight filtration on the cohomology of character varieties of punctured curves with one irregular singularity, 
and extended the $P=W$ conjecture to this case \cite[Problem~0.1.4]{HMW}.
The purpose of this paper is to give an affirmative answer to this conjecture in the Painlev\'e cases. 
Notice that not all the cases we study fall into the class studied in \cite{HMW}, because some of them admit two irregular singularities, some of which with twisted formal type. 
\begin{theorem}\label{thm:PW}
 For every $X$ the perverse Leray and weight 
 polynomials on the cohomology of the Dolbeault and Betti spaces mapped to each other by non-abelian Hodge theory and Riemann--Hilbert correspondence 
 agree: 
 \begin{equation}\label{eq:PW}
  PH^{PX}(q,t) = WH^{PX}(q,t). 
\end{equation}
\end{theorem}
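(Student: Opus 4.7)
\emph{Proof strategy.} Since there are only nine cases to treat and the cohomology of each moduli space is finite-dimensional and concentrated in degrees $0$, $1$, and $2$, the plan is to verify \eqref{eq:PW} case by case by computing the two polynomials $PH^{PX}(q,t)$ and $WH^{PX}(q,t)$ explicitly and comparing them.

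\textbf{Dolbeault side.} Start from the description of $\Mod_{\Dol}^{PX}$ as a quasi-projective surface equipped with a proper (irregular) elliptic Hitchin fibration $h\colon \Mod_{\Dol}^{PX} \to \C$ given in \cite{ISS1, ISS2}, together with the list of its singular fibers in terms of Kodaira types. The decomposition theorem of \cite{BBD} yields a splitting
\begin{equation*}
Rh_{\ast} \Q_{\Mod_{\Dol}^{PX}}[2] \;\cong\; \bigoplus_{i} {}^{\mathfrak p}\mathcal H^{i}(Rh_{\ast}\Q[2])[-i],
\end{equation*}
and because the base is a curve each summand is a shifted intermediate extension of a local system on the regular locus plus skyscraper contributions at the critical values. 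The relevant local system is the relative $H^{1}$ of the smooth fibers with monodromy determined by the Kodaira types, and the skyscraper summands are controlled fiber by fiber by the local invariant cycle theorem. Reading off the decomposition gives $PH^{PX}(q,t)$.

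\textbf{Betti side.} Use the description of $\Mod_{\Betti}^{PX}$ in \cite{PS} as a specific affine cubic surface in $\C^{3}$. Take its projective closure in $\CP{3}$, resolve the singularities along the boundary, and obtain a smooth projective compactification $\overline{\Mod}$ for which $D = \overline{\Mod}\setminus \Mod_{\Betti}^{PX}$ is a simple normal crossings divisor. The mixed Hodge structure of \cite{DelHodge2} is then computed from the logarithmic de Rham complex $\Omega^{\bullet}_{\overline{\Mod}}(\log D)$; the weight spectral sequence degenerates at $E_{2}$ and its input is the cohomology of $\overline{\Mod}$, of the components of $D$, and of their pairwise intersections. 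This yields $WH^{PX}(q,t)$.

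\textbf{Comparison and main obstacle.} It remains to check termwise in each of the nine cases that $PH^{PX}(q,t) = q^{-1} WH^{PX}(q,t)$. The principal technical obstacle lies in the Dolbeault computation: although the decomposition theorem fixes the formal shape of the answer, pinning down the perverse filtration on $H^{1}$ and $H^{2}$ requires precise monodromy data around each critical value and a careful accounting of the non-compact ends of $\Mod_{\Dol}^{PX}$ over $\C$. By contrast, once the boundary divisor of the compactification is described, the Betti computation reduces to standard cubic-surface geometry. I expect the shift by $q^{-1}$ to emerge uniformly from the fact that both spaces are two-dimensional and that their respective ``missing'' strata contribute to the top weight and to the top perversity in parallel ways.
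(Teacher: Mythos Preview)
Your strategy is correct and broadly matches the paper's. The Betti computation is essentially identical: projective closure of the affine cubic, resolution of the $A_k$ singularities on the line at infinity, and Deligne's weight spectral sequence for the resulting SNC compactification.

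On the Dolbeault side the paper takes a slightly different, and cleaner, route than the one you sketch. Rather than invoking the decomposition theorem and tracking monodromy around each critical value, the paper uses the de~Cataldo--Migliorini flag-filtration description of $P$: over a one-dimensional base this amounts to restricting $H^2(\Mod_{\Dol}^{PX})$ to a generic fiber $h^{-1}(Y_{-1})$, so only two graded pieces can be nonzero and the whole computation reduces to the single number $b_2(\Mod_{\Dol}^{PX})$. That number is then obtained not from local monodromy but from the global embedding $\Mod_{\Dol}^{PX}\hookrightarrow E(1)=\CP2\#9\overline{\CP{}}^2$ established in \cite{ISS1,ISS2}: a Mayer--Vietoris argument gives $b_1=0$, and additivity of Euler characteristic yields the uniform formula $d^{PX}=10-\chi(F_\infty^{PX})$ in terms of the Kodaira fiber at infinity alone. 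So the ``principal technical obstacle'' you flag---precise monodromy at each critical value---is in fact bypassed entirely. The paper explicitly remarks that your proposed decomposition-theorem route would also work and ``would be actually quite similar,'' so your approach is valid; the $E(1)$ picture just makes the bookkeeping lighter and produces the answer in closed form across all nine cases at once.
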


Moreover, 
we will prove that the classes generating the exotic pieces of the $P$ and $W$
filtrations match up under non-abelian Hodge theory. 
The idea of the proof is to pass through establishing a conjecture of C.\;Simpson \cite[Conjecture~11.1]{Sim} in this special case (c.f. Theorem~\ref{thm:Simpson}). 
Matching of the $P$ and $W$ filtrations on $H^2$ amounts roughly speaking to the Poincar\'e dual of Theorem~\ref{thm:Simpson} in the boundary $3$-manifold of a neighbourhood of infinity in the moduli space. 

\begin{theorem}\label{thm:Simpson}
 There exists a smooth compactification $\widetilde{\Mod}_{\Betti}^{PX}$ of $\Mod_{\Betti}^{PX}$ by a simple normal crossing divisor $D$ such that the body 
 $| \mathcal{N}^{PX} |$ of the nerve complex $\mathcal{N}^{PX}$ of $D$ is homotopy equivalent to $S^1$. 
 Moreover, for some sufficiently large compact set $K \subset {\Mod}_{\Betti}^{PX}$, there exists a homotopy commutative square  
 $$
  \xymatrix{\Mod_{\Dol}^{PX} \setminus K  \ar[d]_h \ar[r]^{\psi} & {\Mod}_{\Betti}^{PX} \setminus K \ar[d]^{\phi} \\
  D^{\times} \ar[r] & | \mathcal{N}^{PX} |. }
 $$
 Here, $h$ denotes the Hitchin map, $D^{\times} \subset Y$ is a neighbourhood of $\infty$ in the Hitchin base, and the top row is 
 the diffeomorphism coming from non-abelian Hodge theory. 
\end{theorem}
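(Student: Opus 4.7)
The plan is to exploit the explicit description of each Betti space as an affine cubic surface in $\C^3$ established in \cite{PS}, and to construct the smooth compactification $\widetilde{\Mod}_{\Betti}^{PX}$ by taking the projective closure in $\CP{3}$ and then resolving singularities. The divisor at infinity $D$ is cut out by the degree-three homogeneous part of the defining cubic; in each of the nine Painlev\'e cases this homogeneous form factors, up to a linear change of coordinates, as a triangle of three lines in $\CP{2}$ (with possible coincidences between the lines in degenerate cases, requiring additional blow-ups). Since resolution of singularities can only replace singular points of the boundary by chains of smooth rational curves interpolated between the existing components, the combinatorial type of $D$ inside $\widetilde{\Mod}_{\Betti}^{PX}$ remains that of a cycle of smooth rational curves. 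The nerve $\mathcal{N}^{PX}$ of such a cycle is a simplicial circle, so $|\mathcal{N}^{PX}| \simeq S^1$.

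Next, I would construct $\phi$ as the standard dual-complex retraction. Choose a tubular neighbourhood $U$ of $D$ in $\widetilde{\Mod}_{\Betti}^{PX}$ and set $K = \widetilde{\Mod}_{\Betti}^{PX} \setminus U$; then $\Mod_{\Betti}^{PX} \setminus K = U \setminus D$ admits a natural deformation retraction onto $|\mathcal{N}^{PX}|$ sending a point near a single component $D_i$ to the vertex corresponding to $D_i$ and a point near an intersection $D_i \cap D_j$ to an interior point of the edge $[D_i, D_j]$ parameterized by its relative distances to the two components.

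The heart of the argument is the homotopy commutativity of the square. On the Dolbeault side, the direction at infinity in the Hitchin base $Y \cong \C$ (so $D^{\times} \simeq S^1$) is parameterized by the argument of the leading coefficient of the quadratic differential defining the spectral curve; going around $\partial D^{\times}$ once corresponds to a full rotation of this argument. On the Betti side, $|\mathcal{N}^{PX}|$ is parameterized by which boundary component of $D$ one approaches, equivalently by which monomial in the cubic equation dominates. Matching the two parameterizations reduces to the Stokes-theoretic content of the non-abelian Hodge and Riemann--Hilbert dictionary at infinity: as the phase of the Higgs field's leading term rotates, the exponential dominance directions of flat sections rotate correspondingly, so the resulting Stokes data (i.e.\ the affine coordinates on $\Mod_{\Betti}^{PX}$) cyclically blow up along the respective components of $D$ in turn.

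The main obstacle I expect is this final verification of homotopy commutativity, which hinges on sufficiently explicit control of the non-abelian Hodge correspondence near infinity. Because $|\mathcal{N}^{PX}|$ and $\partial D^{\times}$ are both circles, it is enough to show that the induced self-map of $S^1$ has degree $\pm 1$, and this degree can in principle be extracted from the asymptotic expansions of spectral data on one side and of generalized monodromy data on the other, using the explicit models built in \cite{ISS1}, \cite{ISS2} for $\Mod_{\Dol}^{PX}$ and in \cite{PS} for $\Mod_{\Betti}^{PX}$. Carrying out this comparison for each of the nine Painlev\'e families, including the twisted cases where the formal type introduces ramified Stokes sectors, is the remaining technical work.
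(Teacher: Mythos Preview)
Your construction of the compactification and of the map $\phi$ matches the paper's: projective closure of the cubic, minimal resolution of the $A_k$ singularities at the triple points $p_{ij}$, giving a cycle $A_{N^{PX}+2}^{(1)}$ of rational curves as boundary divisor, and a partition-of-unity retraction to its nerve. So far the approaches coincide.

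The divergence is at the homotopy-commutativity step. You propose to verify it by explicit Stokes asymptotics, tracking how rotation of the leading coefficient of the Higgs field cycles the dominant monomial in the cubic; you correctly flag this as the hard part and leave it as case-by-case work for each of the nine families. The paper bypasses this analysis entirely with a purely topological argument. On the Betti side, the loop $\alpha$ winding once around the nerve satisfies $[\alpha]\cap[C]\neq 0$ in the boundary $3$-manifold $\partial T_{\Betti}^{PX}$, where $[C]$ is the sum of the small $2$-tori at the crossings (the generator of $\Gr^W_{-4}H^2$); hence $[\alpha]$ is a nonzero multiple of $PD([C])$. On the Dolbeault side, a lift $\tilde{\beta}\gamma$ of a loop in $D^{\times}$ satisfies $[\tilde{\beta}\gamma]\cap[HF]=1$ in $\partial N_{\Dol}^{PX}$, where $[HF]$ is the generic Hitchin fibre class. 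The key input, already obtained in the proof of Theorem~\ref{thm:PW}, is that $H_2$ of the punctured tubular neighbourhood on the Dolbeault side is one-dimensional, forcing $[C]=q[HF]$ for some $q\in\Q^{\times}$; Poincar\'e duality then gives $[\tilde{\beta}\gamma]=rq[\alpha]$ in $H_1$, and since the target is a $K(\Z,1)$ this proportionality is exactly what is needed to choose a bottom map making the square homotopy commute.

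What this buys: the paper's argument is uniform across all nine cases (including the twisted ones) and requires no quantitative control of the non-abelian Hodge correspondence at infinity---only that it is a diffeomorphism. Your proposed route would give finer information (an explicit identification of the degree, presumably $\pm 1$, rather than just nonzero proportionality over $\Q$), but at the cost of a substantial analytic computation that the paper shows to be unnecessary for the statement as formulated.
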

For details, see Section~\ref{sec:proof}.
This statement in higher generality was conjectured by L.\;Katzarkov, A.\;Noll, P.\;Pandit and C.\;Simpson~\cite[Conjecture 1.1]{KNPS}; in their generalization the homotopy type 
of the body of the nerve complex is conjectured to be that of a sphere and a similar homotopy commutativity relation is conjectured to hold. 
An analogous statement to the homotopy sphere assertion has been proven by A.\;Komyo \cite{Komyo} for two $2$-dimensional and a $4$-dimensional logarithmic Dolbeault moduli spaces. 
In~\cite{Sim}, C.\;Simpson proved the homotopy sphere assertion in the more general setup of character varieties with an arbitrary number of punctures on the projective line, 
and named the homotopy commutativity assertion the Geometric $P = W$ conjecture~\cite[Conjecture 11.1]{Sim}.

In Section~\ref{sec:prep} we give some background material necessary to understand our constructions. 
In Section~\ref{sec:perverse} we describe the perverse filtration on $\Mod_{\Dol}^{PX}$. 
In Section~\ref{sec:weight} we determine the weight filtration on ${\Mod}_{\Betti}^{PX}$ and prove Theorem~\ref{thm:PW}. 
In Section~\ref{sec:proof} we prove Theorem~\ref{thm:Simpson}, 
and in Section~\ref{sec:filtrations} we prove equality of the $P$ and $W$ filtrations in the Painlev\'e VI case.

\bigskip

\noindent {\bf Acknowledgements:} During the preparation of this document, the author benefited of discussions with 
  P.\;Boalch, T.\;Hausel, R.\;Mazzeo, T.\;Mochizuki,  M-H.\;Saito, C.\;Simpson and A.\;Stipsicz, and was supported by the \emph{Lend\"ulet} Low Dimensional Topology 
  grant of the Hungarian Academy of Sciences and by the grants K120697 and KKP126683 of NKFIH. 
  I would also like to thank the anonymous referee for useful suggestions.
  Finally, I would like to thank the hospitality of Kobe University where 
  the first draft of this article was written.
  
\section{Preparatory material}\label{sec:prep}

\subsection{Meromorphic Higgs fields}\label{ssec:Higgs}

Let $D = \sum m_i p_i$ be an effective divisor on $\CP1$, where $p_i \in \CP1$ and $m_i \in \Z_{+}$ satisfy $\sum m_i = 4$. 
The moduli spaces $\Mod_{\Dol}^{PX}$ parameterize certain rank $2$ meromorphic parabolic Higgs bundles over $\CP1$ with poles at such a divisor $D$. 
A \emph{rank $2$ meromorphic Higgs bundle over $\CP1$ with poles at $D$} is a couple $(\mathcal{E}, \theta)$ where 
\begin{itemize}
 \item $\mathcal{E}$ is a rank $2$ holomorphic vector bundle over $\CP1$,
 \item $\theta$ is a meromorphic $\mathcal{O}$-linear morphism 
$$
  \theta \colon \mathcal{E} \to \mathcal{E} \otimes K_{\CP1} (D)
$$
\end{itemize}
subject to certain conditions that we will spell out below. We will assume that the degree of $\mathcal{E}$ is odd. 
With respect to some holomorphic trivialization of $\mathcal{E}$ near $p_i$, we may expand $\theta$ as a convergent Laurent series
$$
  \sum_{m=-m_i}^{\infty} A_m (z-p_i)^{m}
$$
for some $A_m\in \mathfrak{gl}_2(\C)$. 
We define the \emph{residue} of $\theta$ at $p_i$ as $\Res_{p_i} \theta = A_{-1}$; the residue is well-defined up to adjoint action of $\mbox{Gl}_2(\C)$. 
In case $m_i = 1$ the point $p_i$ is said to be a \emph{logarithmic} singular point of $\theta$; if on the other hand $m_i > 1$ we say that the singularity 
of the Higgs field is irregular, and we define its \emph{irregular part} as 
$$
  \sum_{m=-m_i}^{-2} A_m (z-p_i)^{m},
$$
We will assume the following genericity condition for the irregular parts: 
\begin{condition}\label{cond:regular}
For all irregular singular points, the leading-order term $A_{-m_i}$ of the expansion of $\theta$ at $p_i$ is a regular endomorphism. 
\end{condition}
This condition is manifestly independent of the chosen trivialization. 
One further needs to distinguish between Higgs fields with twisted and untwisted irregular part. 
We say that $\theta$ has \emph{untwisted} irregular singularity at $p_i$ if the leading-order term $A_{-m_i}$ of the expansion of $\theta$ is regular semi-simple, and \emph{twisted} irregular singularity at $p_i$ 
if its leading-order term is (up to adding a multiple of the identity) a regular nilpotent. 
If $\theta$ has an untwisted irregular singular point at $p_i$, then we will continue to write $n_i = m_i$ for the coefficient of $p_i$ in the divisor $D$. 
If, on the other hand, $\theta$ has a twisted irregular singular point at $p_i$, then we will denote its coefficient in $D$ as $n_i = m_i - \frac 12$. 
To sum up, the notation
$$
  \sum n_i p_i
$$
encodes the order and twistedness of the poles $p_i$ of $\theta$, and it varies in function of the Painlev\'e type according to Table~\ref{table:singular}. 
\begin{table}
\begin{center}
\begin{tabular}{|l|l|r|}
 \hline 
 $X$ &  $D= \sum n_i p_i$ \\
 \hline 
 \hline 
 $VI$ & $p_1 + p_2 + p_3 + p_4$ \\
 \hline 
 $V$ & $2p_1 + p_2 + p_3$ \\ 
 \hline 
 $III(D6) = V_{\degen}$ & $2 p_1 + 2 p_2 ; \frac 32 p_1 + p_2 + p_3$ \\ 
 \hline 
 $III(D7)$ & $\frac 32 p_1 + 2 p_2$ \\
 \hline 
 $III(D8)$ & $\frac 32 p_1 + \frac 32 p_2$ \\ 
 \hline 
 $IV$ & $3 p_1 + p_2$ \\
 \hline 
 $II$ & $4 p_1; \frac 52 p_1 + p_2$ \\ 
 \hline 
 $I$ & $\frac 72 p_1$ \\
 \hline 
\end{tabular}
\end{center}
 \caption{Types of singularities of the Higgs field. In case several possibilities give rise to the same Painlev\'e equation, they are separated by a semi-colon.} 
 \label{table:singular}
\end{table}
Let us emphasize again that in case the order of the pole of $\theta$ at $p_i$ is equal to $1$, then in the form of $D$ we do not distinguish between regular semi-simple residue orbits and ones with two equal eigenvalues. 
Under Condition~\ref{cond:regular} a \emph{quasi-parabolic structure} is the datum of a $1$-dimensional subspace $\ell_i \subset \mathcal{E}|_{p_i}$ in the $2$-dimensional fiber of $\mathcal{E}$ over $p_i$ for each $i$ such that $m_i =1$.
We impose the following compatibility condition on the above data:
\begin{condition}\label{cond:compatible}
 The line $\ell_i$ is an eigenspace of $\Res_{p_i} \theta$.
\end{condition}
Under the above conditions, in order to define the moduli spaces $\Mod_{\Dol}^{PX}$ we need to fix 
\begin{itemize}
 \item the divisor $D = \sum n_i p_i$ where $2n_i \in \Z$,
 \item for each $i$ the irregular part of $\theta$ near $p_i$ and the adjoint orbit of $\Res_{p_i} \theta$, 
 \item for all $i$ such that $m_i = 1$ the orbit of the reduction of $\Res_{p_i} \theta$ to the Levi quotient of the parabolic subalgebra $\mathfrak{p}_i \subset \mathfrak{gl}_2(\C)$ defined by the line $\ell_i$. 
\end{itemize}
In concrete terms, the requirement on Levi quotient means that the eigenvalues of $\Res_{p_i} \theta$ are fixed, but their adjoint orbit in $\mathfrak{gl}_2(\C)$ is not. 
The moduli spaces $\Mod_{\Dol}^{PX}$ parameterize triples $(\mathcal{E}, \theta, \{ \ell_i \})$ where $(\mathcal{E}, \theta)$ is a meromorphic Higgs bundle over $\CP1$ with polar divisor bounded from above by $D$, 
such that 
\begin{itemize}
 \item for all $i$ satisfying $m_i > 1$ the irregular part of $\theta$  with respect to some holomorphic trivialization of $\mathcal{E}$ near $p_i$ is fixed,
 \item for all $i$ satisfying $m_i > 1$ the adjoint orbit of $\Res_{p_i} \theta$  is fixed, 
 \item and for all $i$ satisfying $m_i = 1$ the line $\ell_i$ defines a compatible quasi-parabolic structure such that the adjoint orbit of the reduction of $\Res_{p_i} \theta$ to the Levi quotient of $\mathfrak{p}_i$ is fixed.  
\end{itemize}
In order to get coarse moduli spaces corresponding to Artin stacks, one needs to restrict to objects satisfying a parabolic semi-stability condition. 
We assume that the parabolic weights are general, so that stability is equivalent to semi-stability. 
For further details, we refer to~\cite{ISS1,ISS2,ISS3}. Let us call attention to the following difference between our assumptions in the present paper and those of~\cite{ISS1,ISS2,ISS3}, 
that will play a fundamental role in the analysis. 
Namely, in~\cite{ISS1,ISS2,ISS3}, we only considered singular Higgs fileds whose residues at logarithmic points were regular, i.e. had non-trivial nilpotent part in case of equal eigenvalues. 
As opposed to this, in case the residue $\Res_p (\theta )$ of the Higgs field at some logarithmic point $p$ is assumed to have two equal eigenvalues, then in the present paper we consider the moduli space 
$\Mod_{\Dol}^{PX}$ of corresponding Higgs bundles completed with all Higgs bundles having the same eigenvalues of its residue but with trivial nilpotent part, equipped with a quasi-parabolic structure of 
full flag type at these points (the compatibility condition being vacuous). The reason we consider this completion of our previously studied spaces is that the Hitchin fibers of the non-completed moduli spaces may be non-compact, as endomorphisms 
with non-trivial nilpotent part may converge to ones with trivial nilpotent part. 

\blue{
Importantly for our purposes, we have: 
\begin{lem}\label{lem:smoothness}
 The completed moduli space $\Mod_{\Dol}^{PX}$ is a smooth complex manifold, and the irregular Hitchin map 
\begin{equation}\label{eq:Hitchin_map}
   h: \Mod_{\Dol}^{PX} \to Y=\C 
\end{equation}
is proper. 
\end{lem}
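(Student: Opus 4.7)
The plan is to establish smoothness and properness separately, using standard parabolic Higgs bundle deformation theory for the former and the explicit elliptic pencil description of the moduli spaces from \cite{ISS1,ISS2,ISS3} for the latter.

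For smoothness I distinguish two strata. Away from the added boundary — i.e., on the locus parameterising Higgs bundles whose residue at each logarithmic puncture has the form originally prescribed — smoothness is classical: under the genericity assumption on parabolic weights stability and semistability coincide, stable objects are simple, and the obstruction $\mathbb{H}^2$ of the two-term parabolic Higgs deformation complex on $\CP1$ vanishes by Serre duality (the dual sheaf has negative parabolic degree). On the added boundary, where the residue is scalar and we remember a full-flag quasi-parabolic structure at the puncture, the same tangent-obstruction formalism applies after replacing the nilpotent endomorphism datum by the flag datum; the resulting deformation complex again has vanishing $\mathbb{H}^2$ on $\CP1$. Hence each stratum is smooth. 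The smoothness of the gluing is most transparent from the explicit description of $\Mod_{\Dol}^{PX}$ as the total space of an elliptic pencil in \cite{ISS2}, where the added boundary appears as a smooth complex submanifold meeting each pencil fibre transversally.

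For properness of $h$, in our rank-two situation on $\CP1$ the Hitchin base $Y$ is the affine line and $h$ is, up to a normalisation accounting for the fixed residues, the map $\theta \mapsto \det \theta$ viewed as a quadratic differential. The elliptic pencil picture identifies each generic fibre of $h$ over the non-completed moduli with an affine open subset of a smooth elliptic curve; the missing points are precisely those at which, in a one-parameter degeneration, the nilpotent residue at some puncture shrinks to zero. The completion reinstates these missing points as Higgs bundles with scalar residue carrying the limiting flag, and so every fibre of $h$ becomes a compact complex curve. Properness now follows by a Langton-type semistable reduction: an arbitrary sequence in $h^{-1}(K)$ for $K \subset Y$ compact admits, after passing to a subsequence, a limit either in the non-completed locus (if the nilpotent parts remain bounded below) or on the added boundary, in which latter case the limit flag is given by the line spanned by the image of the suitably normalised nilpotent part along the convergent family.

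The main obstacle is the holomorphic — not merely topological — compatibility of the added boundary with the non-completed locus. The cleanest way to bypass this is to realise the completed $\Mod_{\Dol}^{PX}$ as an open subset of the relative compactified Jacobian of the elliptic pencil of \cite{ISS2}, in which the boundary divisor appears explicitly as a smooth curve; smoothness of the total space and compactness of each pencil fibre then yield both conclusions of the lemma at once.
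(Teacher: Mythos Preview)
Your stratification argument for smoothness does not close: knowing that the open locus and the added boundary are each smooth says nothing about how they glue, and you acknowledge this yourself. Your proposed bypass via the relative compactified Jacobian of the elliptic pencil is circular in the paper's logical order: the identification of the \emph{completed} moduli space with an open piece of $E(1)$ in the cases $X = II, IV, V_{\degen}, V$ is precisely Lemma~\ref{lem:embedding}, whose proof invokes Lemma~\ref{lem:smoothness} to pin down the Kodaira type of each singular Hitchin fibre. The references \cite{ISS1,ISS2,ISS3} treat the \emph{non}-completed spaces (or, in \cite{ISS3}, only the case $X=VI$), so you cannot simply import the pencil description for the completed object.

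The paper's argument avoids all of this with a single observation you are missing: the completed moduli space \emph{is} the Biquard--Boalch moduli space of \cite[Theorem~5.4]{BB}. That theorem fixes only the image $\pi(\Res_p\theta)$ of the residue in the Levi quotient $\mathfrak{l}$ of the parabolic $\mathfrak{p}$, together with a compatible flag; since $\pi(A) = \pi(A^s)$, allowing the nilpotent part to degenerate to zero while retaining the flag is exactly what \cite{BB} already parameterises. Smoothness is then immediate from \cite{BB}, with no gluing to perform. For properness the paper likewise works from the top down: the full Hitchin system with unconstrained polar parts is proper by \cite{Bottacin,Markman}, the constrained base is an affine line inside $\C^8$ cut out by fixing jets of characteristic coefficients, and the fibre over any point of this line already contains all residues with the prescribed characteristic polynomial (both adjoint orbits), so the completed space is a closed subspace of a proper map. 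Your Langton-type limit argument could be made to work, but as written it again leans on the fibre-by-fibre pencil description that is downstream of the lemma you are proving.
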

}
\begin{proof}
\blue{
 The proof of the first statement follows from \cite[Theorem~5.4]{BB}. 
 Indeed, let us consider an endomorphism 
 $$
  A  \in \mathfrak{gl} ( E|_p ) 
 $$
 of the fiber of a given rank $2$ smooth vector bundle $E$ at $p$. Let the decomposition of $A$ into semi-simple and nilpotent part be 
 $$
  A = A^{s} + A^{nil}
 $$
 and assume that $A^{nil} \neq 0$ (so necessarily $A^{s}$ is a multiple of identity). Finally, let 
 $$
   \mathfrak{p}\subset \mathfrak{gl} ( E|_p )
 $$ 
 stand for the parabolic subalgebra containing $A^{nil}$ and 
 $$
  \pi: \mathfrak{p} \to \mathfrak{l}
 $$ 
 its Levi quotient.
 It follows from \cite[Theorem~5.4]{BB} that the moduli space parameterizing irregular Higgs bundles $(\mathcal{E} , \theta)$ 
 endowed with a compatible parabolic structure, with fixed underlying smooth vector bundle $E$ and such that $\pi ( \Res_p (\theta ) ) = \pi (A)$ 
 is a smooth complex manifold. 
 Now, given that $\pi(A^s ) = \pi (A)$ we get that the completed Dolbeault moduli space is smooth.
 }

 \blue{
 Properness follows from \cite{Bottacin, Markman} for the moduli space of Higgs bundles 
 with some poles of total order $n$ over any compact Riemann surface $C$, without any condition on the polar parts and residues at these points. 
 In the case $C = \CP1$ and a divisor of total multiplicity $4$, the base  $\C^8$ of the Hitchin map for this system contains
 the image of those Higgs bundles having prescribed polar parts and residues as an affine open subspace $\mathbb{A}\cong \C$. 
 Namely, $\mathbb{A}$ is specified by the jet of the characteristic coefficients at the punctures of given order (see \cite{ISS1, ISS2, ISS3}, or in greater generality \cite[Theorems~5,~6]{BKV}). 
 The preimage $h^{-1}(a)$ of any $a\in\mathbb{A}$ is the set of all Higgs bundles having characteristic polynomial corresponding to $a$. 
 By \cite[Lemma~10.1]{ISS2}, at any logarithmic singularity $p$ the characteristic polynomial of the residue of the Higgs field is prescribed by $a$, but its adjoint orbit is not. 
 Conversely, if a sequence of Higgs bundles $(\mathcal{E}_n, \theta_n)_{n\geq 1}$ in $h^{-1}(a)$ converge to some Higgs bundle $(\mathcal{E}_0, \theta_0 )$ then the 
 residue of $\theta_0$ at $p$ has the same characteristic polynomial as the residues of $\theta_n$ at $p$. 
 Replacing a finite number of points in the fiber by projective lines (corresponding to choices of a parabolic line $\ell \subset E|_p$) does not modify properness.
 This implies properness for the completed moduli problem.
 }
\end{proof}

\blue{
\begin{remark}
 If $(\mathcal{E}_0 , \theta_0)$ is an irregular parabolic Higgs bundle such that 
 $$
  \Res_p (\theta_0 )  = A  \in \mathfrak{gl} (E|_p )
 $$ 
 with $A^{nil} \neq 0$ then by virtue of Condition~\ref{cond:compatible} the compatible quasi-parabolic line $\ell \subset E|_p$ is uniquely determined by the requirement $A^{nil} \in \mathfrak{p}$. 
 On the other hand, if $(\mathcal{E}_1 , \theta_1)$ is an irregular parabolic Higgs bundle such that 
 $$
  \Res_p (\theta_1 )  = A^s
 $$
 then the compatible quasi-parabolic line $\ell \subset E|_p$ may be chosen arbitrarily. This fact plays a crucial role in the proof of Lemma~\ref{lem:embedding}. 
\end{remark}
}

\subsection{Stokes data}

\blue{Non-abelian Hodge theory \cite{BB} associates a meromorphic connection $(E, \nabla)$ on a holomorphic vector bundle to a meromorphic Higgs bundle, so 
that the irregular part of $\nabla$ is (up to a scalar factor of $2$) the same as the one of the Higgs field. The eigenvalues of the residue and the parabolic weights 
transform according to Simpson's table \cite{Sim_Hodge}, while their nilpotent parts agree. These rules then completely determine the local singularity behaviour of 
$(E, \nabla)$ at the points $p_i$. The irregular Riemann--Hilbert correspondence \cite{Mal} associates the local system of solutions to the meromorphic connection $(E, \nabla)$. 
It can be conveniently encoded in terms of a representation of the fundamental group of the punctured Riemann surface, complemented with Stokes matrices 
satisfying certain conditions.}

\blue{Let us now describe in detail the types of the Stokes local systems associated to the meromorphic connections $(E, \nabla)$ relevant in the Painlev\'e cases following~\cite{PS}. 
To any logarithmic singular point $p_i$ one associates the local holonomy automorphism $T_i\in \mbox{Gl}(2, \C)$ of $(E, \nabla)$ along a loop winding around $p_i$ once in positive direction. 
Assuming the different eigenvalues of $\Res_{p_i} (\nabla )$ do not differ by integers, this holonomy is determined up to conjugacy by the residue of $\nabla$ 
\begin{equation}
   T_i = L_i \exp (2 \pi \sqrt{-1} \Res_{p_i} (\nabla )) L_i^{-1}
\end{equation}
for some $L_i\in \mbox{Gl}(2, \C)$. 
To an untwisted irregular singular point $p_i$ with $m_i > 1$ the meromorphic connection $(E, \nabla)$ determines Stokes matrices 
$$
S_i^1, S_i^2, \ldots, S_i^{2 m_i-3}, S_i^{2 m_i-2}
$$ 
such that $S_i^{2j-1} \in B_i^+$ and $S_i^{2j} \in B_i^-$ where $B_i^{\pm}$ is a pair of opposite Borels. In terms of the local system, the Stokes matrices arise as follows: on overlapping angular sectors 
$U_i^l$ of opening $\frac{2\pi}{2 m_i-2} + \varepsilon$ centered at $p_i$ the connection $\nabla$ admits convergent fundamental systems having growth behaviour predicted by its irregular type; 
the Stokes matrix $S_i^l$ is then the change of trivializations from the fundamental system on $U_i^l$ to the one on $U_i^{l+1}$ (where $l$ is understood modulo $2 m_i-2$). 
We define the formal monodromy $\gamma$ of $\nabla$ as  
$$
  \gamma = \exp( 2 \pi \sqrt{-1} \mbox{diag}\Res_{p_i} (\nabla ) )
$$ 
where $\mbox{diag}\Res_{p_i} (\nabla )$ stands for the diagonal part  of $\Res_{p_i} (\nabla )$ with respect to a basis singled out by the Cartan $B_i^+ \cap B_i^-$.
The holonomy automorphism about the untwisted irregular singular point $p_i$ is then given by 
$$
  T_i = L_i \gamma S_i^{2 m_i-2} S_i^{2 m_i-3} \cdots S_i^2 S_i^1 L_i^{-1}
$$
for some link automorphism $L_i\in \mbox{Gl}(2, \C)$.}

\blue{
For a twisted irregular singular point $p_i$ with $m_i > 1$, the formal monodromy is necessarily equal to 
$$
  \gamma = \begin{pmatrix}
   0 & -1 \\
   1 & 0
  \end{pmatrix},
$$
and we again have Stokes matrices 
$$
  S_i^1, S_i^2, \ldots, S_i^{2 n_i-3}, S_i^{2 n_i-2}
$$
which alternately belong to $B_i^{\pm}$. The holonomy automorphism about the twisted irregular singular point $p_i$ is then equal to 
$$
  T_i = L_i \gamma S_i^{2 n_i-2} S_i^{2 n_i-3} \cdots S_i^2 S_i^1 L_i^{-1}
$$
for some link automorphism $L_i\in \mbox{Gl}(2, \C)$.
}

\blue{The global condition on the above data is that the product of all holonomies 
\begin{equation}\label{eq:holonomy_product}
   \cdots T_2 T_1 = \mbox{I} 
\end{equation}
is the identity. Moreover, there exists a natural $\mathbb{G}_m^k$-action on the above data, where $\mathbb{G}_m = \C^{\times}$ is the multiplicative group of $\C$ and $k\geq 0$ is an integer. 
The wild character variety $\Mod_{\Betti}^{PX}$ is then by definition the GIT-quotient of the $\mathbb{G}_m^k$-action on the set containing all possible Stokes and monodromy data and link automorphisms satisfying~\eqref{eq:holonomy_product}. 
}

\subsection{Singular fibers of elliptic surfaces}\label{ssec:elliptic}

\begin{figure}[hb] 
\begin{center}
\includegraphics[width=10cm]{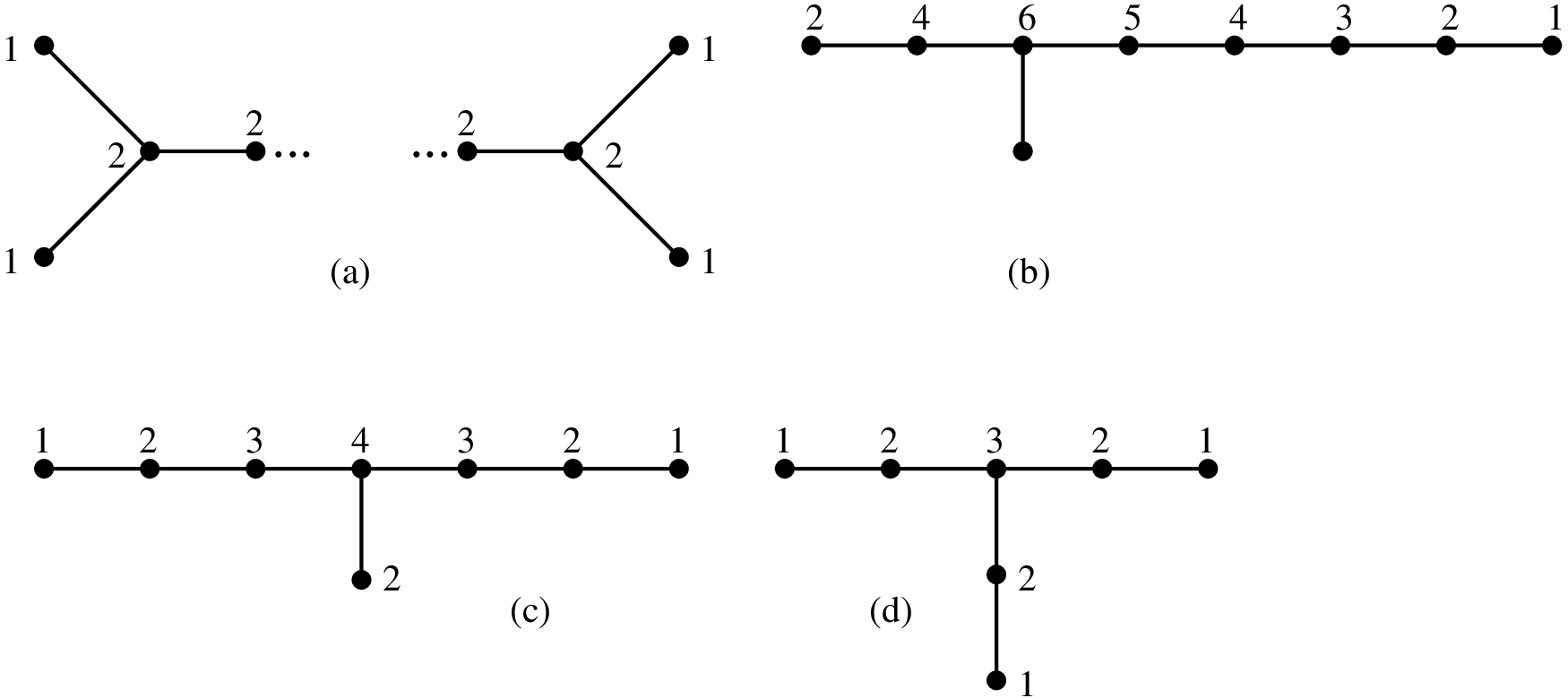} 
\end{center} 
\caption{\quad Dual graphs of singular fibers of types (a) $I_{n-4}^*$, (b) $E_8^{(1)}$, (c) $E_7^{(1)}$, and
(d) $E_6^{(1)}$.  Integers next to vertices indicate the multiplicities of the corresponding components in the fiber. All
vertices of the graphs correspond to rational curves with self-intersection $-2$, and edges correspond to transverse intersection 
of the corresponding components.} 
\label{fig:sing}
\end{figure}

\blue{In this subsection we will recall the classification result of singular fibers of elliptic surfaces due to Kodaira~\cite{Kod}. 
Let $C$ be a smooth algebraic curve over $\C$, $X$ be a smooth algebraic surface over $\C$ and 
$$
  f\colon X\to C 
$$
a proper algebraic morphism whose generic fibers are smooth elliptic curves and whose fibers contain no rational curves of self-intersection $-1$. 
Let $c_0 \in C$ be arbitrary. Then, the fiber $X_{c_0} = f^{-1}(c_0 )$ is bianalytically isomorphic to a curve from the following list:
\begin{itemize}
 \item $I_{n+1} = A_n^{(1)}$, consisting of a cycle of $n+1$ rational curves of self-intersection $-2$ transversely intersecting each other for some $n\in \N$; 
 \item $I_{n-4}^* = D_n^{(1)}$, consisting of $n+1$ rational curves of self-intersection $-2$ transversely intersecting each other according to the graph in Figure~\ref{fig:sing} (a) for some $n\geq 4$;
 \item $E_6^{(1)}, E_7^{(1)}, E_8^{(1)}$, consisting of $7,8$ respectively $9$ rational curves of self-intersection $-2$ transversely intersecting each other according to the graphs in Figure~\ref{fig:sing} (d), (c) and (b); 
 \item $II$, a curve of geometric genus $0$ and self-intersection $0$ with a single cuspidal singular point (locally modelled by $y^2 = x^3$); 
 \item $III$, the union of two smooth rational curves of self-intersection $-2$ tangent to each other to order exactly $2$ at a point; 
 \item $IV$, the union of three smooth rational curves of self-intersection $-2$ transversely intersecting each other at a single point. 
\end{itemize}
The curve $I_0$ is by definition a smooth elliptic curve with some complex structure, and $I_0^* = D_4^{(1)}$ has a vertex of valency $4$, 
therefore the corresponding curves admit a $1$-dimensional modulus each (for $I_0^*$, the modulus is determined by the cross-ratio of the 
points of the central component where the four leaves intersect it). The curve $I_1$ has a single component of self-intersection $0$ with an ordinary double point.}

\section{Perverse Leray filtration}\label{sec:perverse}
We first deal with the left-hand side of \eqref{eq:PW}.
The irregular Hitchin map \eqref{eq:Hitchin_map} endows 
$$
  H^* (\Mod_{\Dol}^{PX}, \Q ) 
$$
with a finite increasing perverse filtration $P^{\bullet}$ through the perverse Leray spectral sequence. As usual, we set 
$$
  \Gr^P_k = P^k / P^{k-1}. 
$$

\begin{prop}\label{prop:Dol}
 We have 
 \begin{align*}
  \dim_{\Q} \Gr^P_{0} H^0 (\Mod_{\Dol}^{PX}, \Q ) & = 1 \\  
   \dim_{\Q} \Gr^P_{1} H^2 (\Mod_{\Dol}^{PX}, \Q ) & = d^{PX} \\
  \dim_{\Q} \Gr^P_{2} H^2 (\Mod_{\Dol}^{PX}, \Q ) & = 1 
\end{align*}
for some $d^{PX} \in \N$, and all the other graded pieces of $H^*$ for $P$ vanish. In particular, we have 
$$
  b_2 ( \Mod_{\Dol}^{PX} ) = 1 + d^{PX}
$$
and 
$$
  PH^{PX}(q,t) = 1 + d^{PX} q t^2 + q^2 t^2. 
$$
Furthermore, we have 
$$
  d^{PX} = 10 - \chi (F_{\infty}^{PX}),
$$
where $F_{\infty}^{PX}$ is the fiber at infinity of $\Mod_{\Dol}^{PX}$ listed in Table \ref{table:Dol}. 
\end{prop}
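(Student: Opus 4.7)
The plan is to apply the decomposition theorem of \cite{BBD} to the proper Hitchin map $h: \Mod_{\Dol}^{PX} \to \C$, whose properness and smoothness of total space are guaranteed by Lemma~\ref{lem:smoothness}. Since the generic Hitchin fiber is a smooth elliptic curve by the elliptic pencil descriptions of \cite{ISS1, ISS2, ISS3}, over the smooth locus $U \subset \C$ of $h$ the higher direct image sheaves satisfy $R^0 h_*\Q|_U \cong R^2 h_*\Q|_U \cong \Q_U$, while $R^1 h_*\Q|_U$ is a rank-$2$ local system $\mathcal{L}$. Consequently the nonzero perverse cohomology sheaves of $Rh_*\Q$ occur in three consecutive degrees, matching the three perverse indices $-1, -2, -3$ of the proposition: the two extremal ones are the constant perverse sheaf $\Q_{\C}[1]$, while the middle one is the intermediate extension $j_{!*}(\mathcal{L}[1])$, with $j: U \hookrightarrow \C$ the open inclusion, together with possible skyscraper summands supported on the critical values of $h$.

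First I would compute the hypercohomology of each perverse summand and plug into the perverse Leray spectral sequence $E_2^{p,q} = H^p(\C, {}^p\mathcal{H}^q(Rh_*\Q)) \Rightarrow H^{p+q}(\Mod_{\Dol}^{PX}, \Q)$. Since $\C$ is contractible, $H^p(\C, \Q_\C[1]) = H^{p+1}(\C, \Q)$ equals $\Q$ when $p=-1$ and vanishes otherwise, so the two constant perverse summands contribute $\Q$ to $H^0$ and $\Q$ to $H^2$ with perverse indices differing by two, producing the $q^{-1}$ and $q^{-3}t^2$ terms. The middle summand contributes only in total degree~$2$ and produces the $d^{PX} q^{-2} t^2$ term. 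Along the way I need to verify $H^1(\Mod_{\Dol}^{PX}, \Q) = 0$ and $H^k(\Mod_{\Dol}^{PX}, \Q) = 0$ for $k \geq 3$; the former reduces to showing that $\mathcal{L}$ has no global invariants over $\C$, which will follow from inspection of the nontrivial monodromies at the critical values listed in \cite{ISS1, ISS2, ISS3}, while the latter follows from Poincar\'e duality and excision applied to the long exact sequence of the pair $(\overline{\Mod}_{\Dol}^{PX}, F_{\infty}^{PX})$ introduced below.

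Next I would pin down $d^{PX}$ via an Euler characteristic computation. Extending the Hitchin fibration by the fiber at infinity $F_{\infty}^{PX}$ listed in Table~\ref{table:Dol} yields a proper elliptic fibration $\overline{h}: \overline{\Mod}_{\Dol}^{PX} \to \CP1$ whose total space is a rational elliptic surface with $\chi(\overline{\Mod}_{\Dol}^{PX}) = 12$. Additivity of the Euler characteristic together with the vanishing of $\chi$ of a smooth elliptic curve then gives $\chi(\Mod_{\Dol}^{PX}) = 12 - \chi(F_{\infty}^{PX})$. Combined with $\chi(\Mod_{\Dol}^{PX}) = 1 + (1 + d^{PX}) = 2 + d^{PX}$ read off from the Betti numbers of the previous step, this yields $d^{PX} = 10 - \chi(F_{\infty}^{PX})$.

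The hardest part will be the uniform identification of the compactification as a rational elliptic surface across all nine Painlev\'e cases, together with the precise bookkeeping of skyscraper contributions to the middle perverse cohomology sheaf at reducible or non-reduced fibers of $h$. In the cases where $\Mod_{\Dol}^{PX}$ was \emph{completed} by adjoining Higgs bundles with split residue and full quasi-parabolic flags at certain logarithmic points, one additionally needs to verify that the adjoined projective lines assemble into the elliptic pencil picture without destroying the rational elliptic surface structure of $\overline{\Mod}_{\Dol}^{PX}$.
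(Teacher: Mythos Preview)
Your approach is correct, and in fact the paper explicitly acknowledges in a Remark immediately following its proof that the decomposition theorem route you outline ``would be actually quite similar to the argument presented above.'' The paper chooses instead to use the geometric characterization of the perverse filtration via the flag filtration of de~Cataldo--Migliorini \cite{dCM}: it restricts to a generic point $Y_{-1}\in Y$ and identifies $\Gr^P_{-3}H^2$ and $\Gr^P_{-2}H^2$ as the image and kernel of the restriction map $H^2(\Mod_{\Dol}^{PX},\Q)\to H^2(h^{-1}(Y_{-1}),\Q)$, then shows this restriction is nonzero (its Lemma~\ref{lem:inclusion}) by arguing that the generic fiber is not a boundary in $E(1)$. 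This gives the same splitting you obtain from the two copies of $\Q_{\C}[1]$ in the decomposition, without having to name the IC or skyscraper summands explicitly. The main substantive difference is the vanishing of $b_1$: you propose to read it off from the absence of monodromy invariants of $\mathcal{L}$ at the critical values, whereas the paper runs a Mayer--Vietoris argument on the covering $E(1)=\Mod_{\Dol}^{PX}\cup N$ with $N$ a tubular neighbourhood of $F_\infty^{PX}$, and shows the connecting map is surjective by pairing with the hyperplane class (its Lemma~\ref{lem:b_1}). The paper's argument has the advantage of being uniform in $X$ and not requiring a case-by-case inspection of local monodromies; your argument is more in the spirit of the decomposition theorem machinery but does require that inspection. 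Finally, what you flag as the ``hardest part''---verifying that the completed moduli spaces still embed in $E(1)$ with the listed fiber $F_\infty^{PX}$---is exactly what the paper spends its Lemma~\ref{lem:embedding} on, via a case analysis of the possible degenerate Hitchin fibers $I_2,I_3,I_4,III,IV$ and their Grothendieck classes; you would need essentially the same analysis.
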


The specific forms of $PH^{PX}(q,t)$ can then easily be determined using Proposition \ref{prop:Dol} and the fibers $F_{\infty}^{PX}$, and for convenience they are included in Table \ref{table:Dol}. 

\begin{table}
\begin{center}
\begin{tabular}{|l|l|r|}
 \hline 
 $X$ & $F_{\infty}^{PX}$ & $PH^{PX}(q,t)$ \\
 \hline 
 \hline 
 $VI$ & $D_4^{(1)}$ & $1 + 4 q t^2 + q^2 t^2$ \\
 \hline 
 $V$ & $D_5^{(1)}$ & $1 + 3 q t^2 + q^2 t^2$ \\ 
 \hline 
 $V_{\degen}$ & $D_6^{(1)}$ & $1 + 2 q t^2 + q^2 t^2$ \\
 \hline 
 $III(D6)$ & $D_6^{(1)}$ & $1 + 2 q t^2 + q^2 t^2$ \\ 
 \hline 
 $III(D7)$ & $D_7^{(1)}$ & $1 +   q t^2 + q^2 t^2$ \\
 \hline 
 $III(D8)$ & $D_8^{(1)}$ & $1 +                q^2 t^2$ \\ 
 \hline 
 $IV$ & $E_6^{(1)}$ & $1 + 2 q t^2 + q^2 t^2$ \\
 \hline 
 $II$ & $E_7^{(1)}$ & $1 +   q t^2 + q^2 t^2$ \\ 
 \hline 
 $I$ & $E_8^{(1)}$ & $1 +                q^2 t^2$ \\
 \hline 
\end{tabular}
\end{center}
 \caption{Fiber at infinity and perverse Hodge polynomial of $\Mod_{\Dol}^{PX}$} 
 \label{table:Dol}
\end{table}

\begin{proof}
As $\Mod_{\Dol}^{PX}$ is a non-compact oriented $4$-manifold, by Poincar\'e duality we have 
$$
  b_4 ( \Mod_{\Dol}^{PX} ) = 0. 
$$
Let $\H$ denote hypercohomology of a complex of sheaves and $H$ stand for cohomology of a single sheaf. 
Here and throughout this section, for ease of notation we drop the subscript ${\Dol}$ and the superscript $PX$ of $\Mod_{\Dol}^{PX}$ whenever this latter is in subscript.  
It is known that there exists a spectral sequence ${}_L^{\mathfrak{p}} E_r^{k,l}$ called {\it perverse Leray spectral sequence} degenerating at the second page 
$$
  {}_L^{\mathfrak{p}} E_2^{k,l} =  {}^{\mathfrak{p}} H^k (Y, {}^{\mathfrak{p}} R^l h_* \underline{\Q}_{\Mod}) \Rightarrow H^{k+l} (\Mod_{\Dol}^{PX}, \Q). 
$$
\blue{The perverse filtration on $H^{\bullet} (\Mod_{\Dol}^{PX}, \Q)$ is defined as the filtration 
\begin{equation}\label{eq:perverse_filtration}
  P^p \H^{\bullet} (Y, \R h_* \underline{\Q}_{\Mod} ) = \oplus_{l\leq p} {}_L^{\mathfrak{p}} E_2^{k,l} 
\end{equation}
induced by $h$ \eqref{eq:Hitchin_map} on the terms of ${}_L^{\mathfrak{p}} E_2$.} 

\blue{In order to be able to compute the graded pieces of~\eqref{eq:perverse_filtration}, we first need a digression on the structure of the Dolbeault spaces. 
Namely, we know from \cite[Theorems~1.1,~1.2, Proposition~4.2]{ISS1}, \cite[Theorems~2.1,~2.2,~2.3]{ISS2} that for each 
$$
 X \in \{ I, III(D6), III(D7), III(D8) \} 
$$
there exists an embedding 
$$
  \Mod_{\Dol}^{PX} \hookrightarrow E(1) 
$$
of $\Mod_{\Dol}^{PX}$ into the rational elliptic surface 
\begin{equation}\label{eq:elliptic_surface}
   E(1) = \CP2 \# 9 \overline{\CP{}}^2 
\end{equation}
so that 
$$
  E(1) \setminus \Mod_{\Dol}^{PX} = F_{\infty}^{PX} 
$$
for some non-reduced curve $F_{\infty}^{PX}$, moreover there exists an elliptic fibration 
$$
  \tilde{h}: E(1) \to \CP1
$$
so that the following diagram commutes 
\begin{equation}\label{eq:diagram}
  \xymatrix{
    \Mod_{\Dol}^{PX} \ar[d]_h \ar@{^{(}->}[r] & E(1) \ar[d]^{\tilde{h}} \\
  \C \ar@{^{(}->}[r] & \CP1. }
\end{equation}
In particular, we have 
$$
  \tilde{h}^{-1} (\infty ) = F_{\infty}^{PX}. 
$$
The type of the curves $F_{\infty}^{PX}$ is determined by $X$ and is listed in Table \ref{table:Dol}. 
In addition, there is a map 
\begin{equation}\label{eq:ruling}
  p: E(1) \to \CP1 
\end{equation}
which is the $8$-point blow-up of the ruling of the Hirzebruch surface of degree $2$. 
In this picture, the central component of $F_{\infty}^{PX}$ is the fiber at infinity of $p$ and its other components arise from the proper 
transform of the blown-up fibers and possibly some exceptional divisors.  
}

\blue{
If the residues of the Higgs field at the simple poles are assumed to have distinct eigenvalues then exactly the same results hold in the cases $II, IV, V_{\degen}, V, VI$ too. 
For $X = II, IV$ this follows from \cite[Theorems~2.4,~2.6]{ISS2}, for $X =V_{\degen}, V, VI$ see \cite[Sections~1.3,~1.4]{ISS3}. 
On the other hand, in all cases $II, IV, V_{\degen}, V, VI$ where one of the poles is simple and the residue has equal eigenvalues, 
the same statement holds for the completed Dolbeault moduli spaces. 
In case $X = VI$ this is precisely \cite[Proposition~2.9]{ISS3}. 
\begin{lem}\label{lem:embedding}
In cases $X = II, IV, V_{\degen}, V$, assume that there is a simple pole of the Higgs field such that the residue has equal eigenvalues. 
Then, there exists an embedding 
$$
  \Mod_{\Dol}^{PX} \hookrightarrow E(1) 
$$
of the completed Dolbeault moduli spaces so that 
$$
  E(1) \setminus \Mod_{\Dol}^{PX} = F_{\infty}^{PX} 
$$
for the non-reduced curve $F_{\infty}^{PX}$ listed in Table \ref{table:Dol}, and the diagram \eqref{eq:diagram} commutes. 
\end{lem}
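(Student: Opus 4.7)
The plan is to mirror the strategy of \cite[Proposition~2.9]{ISS3}, which proves the analogous statement for $X = VI$, and to adapt it separately to each of $X \in \{II, IV, V_{\degen}, V\}$. In each of these cases, the distinct-eigenvalue version of the embedding $\Mod_{\Dol}^{PX} \hookrightarrow E(1)$ has already been constructed in \cite[Theorems~2.4,~2.6]{ISS2} together with \cite[Sections~1.3,~1.4]{ISS3}, along with the elliptic fibration $\tilde{h}: E(1) \to \CP1$ realising $\tilde{h}^{-1}(\infty) = F_{\infty}^{PX}$ of the Kodaira type listed in Table \ref{table:Dol}. Hence the task reduces to checking that, after replacing the chosen scalar-residue simple pole by the corresponding completion, the enlarged moduli space still fills out precisely $E(1) \setminus F_{\infty}^{PX}$ and remains compatible with the same Hitchin fibration.

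The first step is to describe the completed moduli space directly via the characteristic polynomial map, as in the proof of Lemma \ref{lem:smoothness}. The affine open subspace $\mathbb{A} \cong \C$ of the Hitchin base parametrises the prescribed jet data of the characteristic polynomial at the punctures, and at a generic $a \in \mathbb{A}$ the fiber $h^{-1}(a)$ is a smooth elliptic curve by the spectral-curve description of \cite{ISS1,ISS2,ISS3}. By the remark preceding the lemma, a Higgs bundle $(\mathcal{E}_0 , \theta_0)$ with $\Res_p (\theta_0) = A^s + A^{nil}$, $A^{nil} \neq 0$, determines its parabolic line uniquely, while a Higgs bundle $(\mathcal{E}_1 , \theta_1)$ with $\Res_p (\theta_1) = A^s$ admits an arbitrary parabolic line $\ell \subset E|_p$. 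Consequently, the completion adjoins a $\CP1$-family of points at each configuration where the nilpotent part degenerates to zero, converting a non-compact stratum of the non-completed Hitchin fiber into a compact smooth elliptic curve. This is precisely the Hecke-type modification used in \cite[Proposition~2.9]{ISS3}, and its globalisation over the Hitchin base yields a proper elliptic fibration to which the $VI$-case argument extends verbatim.

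The remaining step is the case-by-case identification of the resulting smooth projective total space with $E(1)$ and of the fiber at infinity with the Kodaira type listed in Table \ref{table:Dol}. Smoothness of the ambient surface follows from Lemma \ref{lem:smoothness}; commutativity of diagram \eqref{eq:diagram} is automatic since the completion is performed fibrewise over the Hitchin base; and since by \cite[Lemma~10.1]{ISS2} the scalar-residue locus sits over a single point of $\mathbb{A}$, the singular fibers of $\tilde{h}$ away from infinity are inherited unchanged from the distinct-eigenvalue model. The numerical constraint that the Euler characteristics of the singular fibers of a rational elliptic surface sum to $12$ then pins down $F_{\infty}^{PX}$ uniquely.

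The main obstacle is the explicit local verification, carried out separately at each of the four Painlev\'e cases, that the Hecke-type transform near the added stratum glues correctly onto the known distinct-eigenvalue embedding without introducing spurious base points, and in particular without touching $F_{\infty}^{PX}$. The pole orders and, in the twisted subcases, the formal types differ among the four instances, so the local model of the completion has to be written down afresh each time; this is the bulk of the technical work, but once carried out, the globalisation and the identification with $E(1)$ follow formally from the $VI$-case argument.
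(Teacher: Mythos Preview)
Your outline follows a plausible strategy, but it differs substantially from the paper's proof and contains at least one incorrect claim. You assert that the completion ``convert[s] a non-compact stratum of the non-completed Hitchin fiber into a compact smooth elliptic curve''; this is wrong. The Hitchin fiber over the special point $t\in\mathbb{A}$ where the nilpotent part can degenerate is \emph{singular}, not smooth --- it is of Kodaira type $I_2, I_3, I_4, III$ or $IV$ depending on the case. Your proposal gives no mechanism for identifying which type arises, and the Euler-characteristic argument you invoke at the end cannot do this job: it would only determine $F_\infty^{PX}$ once you already know the types of \emph{all} finite singular fibers and that the total space is $E(1)$, neither of which you have established for the equal-eigenvalue parameters. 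The phrase ``inherited unchanged from the distinct-eigenvalue model'' is also problematic, since the equal-eigenvalue and distinct-eigenvalue moduli spaces correspond to different parameter values and are not a priori related by any map.

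The paper takes a quite different route. Rather than organising the argument by Painlev\'e type and gluing a Hecke modification onto a known model, it works directly with the pencil of spectral curves and the description of the moduli space as a relative compactified Picard scheme (\cite[Theorem~1.1]{Sz_BNR}). The key geometric input is \cite[Lemma~4.5]{ISS2}: when the residue at a simple pole $p_1$ has equal eigenvalues, one exceptional divisor of the base-locus blow-up becomes a component of a single fiber $X_t$. The proof then case-splits not by $X\in\{II,IV,V_{\degen},V\}$ but by the Kodaira type of $X_t$ (five cases: $I_2, I_3, I_4, III, IV$). In each case it enumerates torsion-free sheaves on the corresponding spectral curve $Z_t$ using \cite[Lemmas~7.2,~10.1,~10.2]{ISS2}, computes the Grothendieck class of the completed Hitchin fiber (locally free sheaves plus the added $\CP1$ of parabolic lines for the non-locally-free ones), and observes that exactly one Kodaira fiber has that class --- namely the same type as $X_t$. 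Smoothness from Lemma~\ref{lem:smoothness} then forces the completed fiber to be of that type. This approach avoids any comparison with the distinct-eigenvalue model and handles the ``explicit local verification'' you defer as its main content.
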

\begin{proof}
The proof is similar to the $PVI$ case. 
We consider the pencil of spectral curves associated to Higgs bundles with poles of the given local forms. 
According to \cite[Theorem~1.1]{Sz_BNR}, the moduli space arises as a certain relative compactified Picard-scheme of this pencil. 
In order to determine the relative compactified Picard-scheme, one first needs to blow up the base locus of the pencil of spectral curves; 
in general this process involves blowing up infinitesimally close points. 
The common phenomenon in the cases when the residue of the Higgs field at a simple pole $p_1$ has equal eigenvalues is that one exceptional divisor 
$E$ of the blow-up process (with self-intersection number equal to $(-2)$) maps to $p_1$ under the ruling and becomes a component of one of the fibers $X_t$ 
in the fibration. In the cases $X = II, IV$ this is precisely proven in \cite[Lemma~4.5]{ISS2}. 
The same proof goes along for the other types too, because both the assumptions and the assertion is local at the fiber of the ruling over $p_1$. 
Let us denote by $Z_t$ the singular curve in the pencil of spectral curves whose proper transform contains $E$, so that $X_t$ is the proper transform of $Z_t$. 
It follows from \cite[Section~6]{SSS} that $X_t$ is one of the Kodaira types 
$$
  I_2, I_3, I_4, III, IV. 
$$
The corresponding spectral curves $Z_t$ are listed before \cite[Lemma~10.1]{ISS2}, except in case $X_t$ is of type $I_4$. 
The case $I_4$ may only occur in cases $X = V_{\degen}, V, VI$, under the assumption that there exists two simple poles $p_1,p_2$ of the Higgs field such that for $i\in \{ 1,2 \}$ 
$\Res_{p_i}(\theta )$ has two equal eigenvalues. In this case two non-neighbouring components of $X_t$ get mapped to $p_1,p_2$ respectively under the ruling and 
$Z_t$ consists of two rational curves (sections of the Hirzebruch surface of degree $2$) intersecting each other transversely in two points, one on the fiber over $p_1$ and another one on the fiber over $p_2$. 
Indeed, $Z_t$ may have at most two components because it is a $2:1$ ramified covering of the base curve $\CP1$, so two components of $X_t$ must be exceptional divisors of the 
blow-up process; one of these two components must come from blow-ups at $p_1$ and the other one from blow-ups at $p_2$, for otherwise the dual graph could not be a cycle. 
By \cite[Lemma~10.1]{ISS2}, Higgs bundles whose residue at $p_1$ (and $p_2$ in case $IV$) has non-trivial nilpotent part correspond to locally free spectral sheaves over $Z_t$ at $p_1$ (respectively, $p_2$). 
For such curves $Z_t$, \cite[Lemma~10.2]{ISS2} determines the families of locally free spectral sheaves giving rise to parabolically stable Higgs bundles. 
On the other hand, any torsion-free sheaf on $Z_t$ is the direct image of a locally free sheaf on a partial normalization. 
Let us separate cases according to the type of $X_t$. 
\begin{enumerate}
 \item 
If $X_t$ is of type $I_2$ then $Z_t$ is a nodal rational curve with a single node on the fiber over $p_1$; there exists a family parameterized by $\C^{\times}$ of locally free sheaves 
and a unique torsion-free but not locally free sheaf of given degree on $Z_t$. 
This latter non-locally free torsion free sheaf gives rise to a unique Higgs bundle whose residue has the required eigenvalue of multiplicity $2$ and trivial nilpotent part. 
This object is irreducible hence stable. 
On the other hand, the choice of quasi-parabolic structure at $p_1$ compatible with this unique Higgs bundle is an arbitrary element of $\CP1$. 
This gives us that the Grothendieck class of the Hitchin fiber of the completed moduli space over the point $t$ is 
$$
  [\C^{\times}] + [\CP1 ]. 
$$
As the unique Kodaira fiber in this class is $I_2$, we deduce from Lemma~\ref{lem:smoothness} that the Hitchin fiber of the completed moduli space over the point $t$ is of this type, 
i.e. the same type as $X_t$. 
\item If $X_t$ is of type $I_3$ then $Z_t$ is composed of two sections of the ruling intersecting each other transversely in two distinct points, one of them lying on the fiber over $p_1$. 
As shown in \cite[Lemma~10.2.(2)]{ISS2}, Higgs bundles with spectral curve $Z_t$ and residue having non-trivial nilpotent part form a family parameterized by a variety in the class 
$$
  2 [\C^{\times}] + [pt]. 
$$
As in the previous point, there exists a single torsion-free but not locally free sheaf giving rise to a Higgs bundle with spectral curve $Z_t$ such that $\Res_{p_1}(\theta )$ 
has trivial nilpotent part. Again, the quasi-parabolic structure at $p_1$ compatible with this unique Higgs bundle is parameterized by $\CP1$, so the class of the 
Hitchin fiber of the completed moduli space over the corresponding point $t$ is 
$$
    2 [\C^{\times}] + [pt] + [\CP1 ]. 
$$
The only Kodaira fiber in this class is $I_3$, hence the Hitchin fiber of the completed moduli space over $t$ is $I_3$. 
\item For $X_t$ is of type $I_4$, as we already mentioned, $Z_t$ is a union of two sections of the ruling that intersect each other transversely in two points: 
one on the fiber over each of $p_1,p_2$. Therefore, the analysis is quite similar to the case of $I_3$ treated above:  Higgs bundles with spectral curve $Z_t$ such that 
both $\Res_{p_1}(\theta ), \Res_{p_2}(\theta )$ have non-trivial nilpotent part are parameterized by a variety in class 
$$
  2 [\C^{\times}]. 
$$
(The class of the point $[pt]$ that appears in the case $I_3$ is missing here because it corresponds to a torsion-free but non-locally free sheaf at $p_2$ 
which would give rise to a Higgs bundle with trivial nilpotent part at $p_2$.) 
Now, there exists a single sheaf of given degree that is locally free at $p_1$ and torsion-free but non-locally free at $p_2$. 
For the Higgs bundle obtained as the direct image of this sheaf, compatible quasi-parabolic structures at $p_1$ are parameterized by $\CP1$. 
The same observations clearly apply with $p_1, p_2$ interchanged too. Finally, notice that stability excludes that the spectral sheaf be 
torsion-free but non-locally free at both $p_1, p_2$: this would mean that the spectral sheaf comes from the normalization of $Z_t$, so 
the corresponding Higgs bundle would be decomposable. In sum, the class of the Hitchin fiber of the completed moduli space over $t$ is 
$$
  2 [\C^{\times}] + 2 [\CP1 ]. 
$$
As the only Kodaira fiber in this class is $I_4$, we infer that the Hitchin fiber of the completed moduli space over $t$ is of type $I_4$. 
\item If $X_t$ is of type $III$ then $Z_t$ is a cuspidal rational curve with a single cusp on the fiber over $p_1$. 
 Stability of any Higgs bundle with spectral sheaf supported on $Z_t$ again follows from irreducibility. 
 By virtue of \cite[Lemma~7.2]{ISS2}, locally free sheaves of given degree on $Z_t$ are parameterized by $\C^{\times}$
 and there exists a single non-locally free torsion free sheaf of given degree on $Z_t$. This latter gives rise to 
 a unique Higgs bundle in the extended moduli space with residue having trivial nilpotent part. 
 Again, compatible quasi-parabolic structures at $p_1$ provide a further $\CP1$ of parameters, so that  
 the class of the Hitchin fiber of the completed moduli space over $t$ is 
 $$
  [\C] + [\CP1 ]. 
 $$
 As the unique Kodaira fiber in this class is $III$, we see that the Hitchin fiber of the completed moduli space over $t$ is of type $III$. 
 \item If $X_t$ is of type $IV$ then $Z_t$ consists of two sections of the Hirzebruch surface, simply tangent to each other on the fiber over $p_1$.
  \cite[Lemma~10.2.(4)]{ISS2} implies that stable Higgs bundles with spectral curve $Z_t$ and $\Res_{p_1}(\theta )$ having non-trivial nilpotent part are 
  parameterized by a variety in class 
$$
  2 [\C], 
$$
while there exists a unique equivalence class of stable Higgs bundles with spectral curve $Z_t$ and $\Res_{p_1}(\theta )$ having trivial nilpotent part. 
 For this latter, we again have to add the parameter space $\CP1$ of compatible quasi-parabolic structures at $p_1$ to the above family, 
 leading to the class 
 $$
  2 [\C] + [\CP1 ].
 $$
 As the only Kodaira fiber in this class is $IV$, the corresponding Hitchin fiber is of type $IV$ too. 
\end{enumerate}
\end{proof}
\begin{remark}
In the Lemma we found that the completed Hitchin system has the same type of singular fibers as the associated fibration of spectral curves. 
A similar statement is shown in~\cite[Corollary~6.7]{AF}, based on the analysis~\cite{HRLT} of Fourier--Mukai transform for sheaves on 
various singular elliptic curves. Our result is more general than the one of~\cite{AF} in that it also treats the ramified 
Dolbeault moduli spaces and consequently more types of singular fibers enter into the picture, and we also consider the dependence 
of our result on parabolic weights. As $\mbox{Gl}(2, \C)$ is Langlands-selfdual, the above relative self-duality result can be considered 
as an irregular version of Mirror Symmetry of Hitchin systems~\cite{HT}. 
\end{remark}
}
\blue{We are now ready to return to the study of the perverse filtration~\eqref{eq:perverse_filtration}. 
Our computation closely follows~\cite[Proposition~4.11]{HST}. 
We first observe that since the fibers of $h$ are connected, ${}^{\mathfrak{p}} R^0 h_* \underline{\C}_{\Mod} = \underline{\C}_Y [-1]$, the trivial local system of rank $1$ over $Y$ placed in degree $1$. 
By the relative hard Lefschetz theorem, the same then holds for ${}^{\mathfrak{p}} R^2 h_* \underline{\C}_{\Mod}$ too.}
\blue{
We get that ${}_L^{\mathfrak{p}} E_2^{k,l}$ is of the form 
$$
  \xymatrix{
  k = 2 & 0 & \C^{b_3(\Mod)} & 0  \\
  k = 1 & 0 & {}^{\mathfrak{p}} H^1 (Y, {}^{\mathfrak{p}} R^1 h_* \underline{\C}_{\Mod})& 0 \\ 
  k = 0 & \C & \C^{b_1(\Mod)} & \C \\
    & l = 0 & l = 1 & l = 2 
  }
$$
The perverse Leray spectral sequence degenerates at this term. In particular, the dimension $b_1(\Mod)$ of ${}_L^{\mathfrak{p}} E_2^{0,1}$ is equal to the first Betti number $b_1(\Mod_{\Dol}^{PX} )$ 
and the dimension $b_3(\Mod)$ of ${}_L^{\mathfrak{p}} E_2^{2,1}$ is equal to the third Betti number $b_3(\Mod_{\Dol}^{PX} )$.} 
\begin{lem}\label{lem:b_1}
 We have $b_1(\Mod_{\Dol}^{PX} ) = 0$ and $b_3(\Mod_{\Dol}^{PX} ) = 0$. 
\end{lem}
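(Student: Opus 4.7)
The plan is to exploit the Leray spectral sequence computation just carried out and to prove that ${}_L E_2^{0,1} = H^0(Y, R^1 h_*\underline{\C}_{\Mod})$ vanishes; by the degeneration of the Leray spectral sequence already invoked, this is precisely $\C^{b_1(\Mod)}$. Since $h$ is proper by Lemma~\ref{lem:smoothness}, the restriction of $R^1 h_*\underline{\C}_{\Mod}$ to the smooth locus $Y^{\reg}\subset Y$ (the complement of the finite critical locus of $h$) is a local system $\mathcal{L}$ whose fibre is the first cohomology $\C^2$ of the generic elliptic Hitchin fibre. A global section of $R^1 h_*\underline{\C}_{\Mod}$ restricts to a global section of $\mathcal{L}$, i.e.\ to a fixed vector of the monodromy representation $\rho:\pi_1(Y^{\reg})\to \mathrm{GL}(\C^2)$. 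It therefore suffices to show $\mathcal{L}^{\rho(\pi_1)}=0$.

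Here I would use the commutative square \eqref{eq:diagram} to extend $h$ to the elliptic fibration $\tilde h: E(1)\to\CP1$. A small loop $\gamma_\infty$ in $Y=\C$ encircling $\infty\in\CP1$, taken sufficiently far out to miss all critical values of $h$, represents an element of $\pi_1(Y^{\reg})$; its monodromy $\rho(\gamma_\infty)$ coincides with the local Picard--Lefschetz monodromy of $\tilde h$ around the divisor $F_\infty$, and is therefore determined by the Kodaira type of $F_\infty$ recorded in Table~\ref{table:Dol}. Hence any $\rho$-invariant vector of $\mathcal{L}$ is in particular fixed by $\rho(\gamma_\infty)$, and it is enough to check that this single monodromy operator has no nonzero fixed vector.

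The last step is a case-by-case verification using Kodaira's classification. For $F_\infty$ of type $D_n^{(1)}=I_{n-4}^*$ with $n\in\{4,\dots,8\}$ the monodromy is conjugate to $-T^{n-4}\in\mathrm{SL}(2,\Z)$ with $T$ a unipotent Jordan block, whose only eigenvalue is $-1$. For $F_\infty$ of type $E_n^{(1)}$ with $n\in\{6,7,8\}$ (Kodaira types $IV^{*}, III^{*}, II^{*}$) the monodromy has finite order $3$, $4$, or $6$ respectively, with eigenvalues primitive roots of unity. In every case $1$ is not an eigenvalue, so $\mathcal{L}^{\rho(\gamma_\infty)}=0$, which forces $b_1(\Mod_{\Dol}^{PX})=0$. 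The one substantive point in the argument is the identification of $\rho(\gamma_\infty)$ with the Kodaira local monodromy at $F_\infty$, and this follows immediately from the commutative diagram \eqref{eq:diagram}; the rest is a table lookup.
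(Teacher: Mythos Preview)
Your approach is correct and genuinely different from the paper's. The paper argues topologically via Mayer--Vietoris for the cover $E(1)=\Mod_{\Dol}^{PX}\cup N$, where $N$ is a tubular neighbourhood of $F_\infty^{PX}$: since $H^1(E(1),\Q)=0$, it suffices to show that the connecting map $H^1(\Mod\cap N,\Q)\to H^2(E(1),\Q)$ is injective, and this is carried out by an intersection-theory argument with the hyperplane class of $E(1)$. You instead attack $H^0(Y,R^1h_*\underline{\C}_{\Mod})$ directly through the monodromy at infinity, which meshes well with the Leray spectral sequence already set up before the lemma; Kodaira's monodromy table then settles all cases at once, whereas the paper's argument is more hands-on but avoids any appeal to the classification of local monodromies.

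One step in your argument deserves an extra word of justification. From $\mathcal{L}^{\rho(\pi_1(Y^{\reg}))}=0$ you conclude $H^0(Y,R^1h_*\underline{\C}_{\Mod})=0$, but this requires the restriction map
\[
H^0(Y,R^1h_*\underline{\C}_{\Mod})\longrightarrow H^0(Y^{\reg},\mathcal{L})
\]
to be injective, i.e.\ that $R^1h_*\underline{\C}_{\Mod}$ has no sections supported on the finite critical locus. Concretely this is the injectivity of the cospecialisation map $H^1(h^{-1}(z))\to H^1(h^{-1}(t))^{T_z}$ at each critical value $z\in Y$. It does hold for every Kodaira fibre type: fibres of type $I_n$ have $H^1\cong\C$ and the cospecialisation is an isomorphism onto the rank-one $T_z$-invariant subspace, while all remaining types ($II,III,IV,I_n^*,II^*,III^*,IV^*$) are simply connected, so $H^1=0$ and injectivity is trivial. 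Once this is noted your proof is complete, and the identification of $\rho(\gamma_\infty)$ with the Kodaira monodromy at $F_\infty$ via diagram~\eqref{eq:diagram} is indeed the only substantive ingredient.
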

\begin{proof}
Let $N$ denote a tubular neighbourhood of $F_{\infty}^{PX}$ in $E(1)$ and consider the covering 
$$
  E(1) = \Mod_{\Dol}^{PX} \cup N. 
$$
Part of the associated Mayer--Vietoris cohomology long exact sequence reads as 
\begin{align}
 \to H^1(E(1), \Q ) & \to H^1 (\Mod_{\Dol}^{PX} , \Q ) \oplus H^1 (N, \Q ) \to H^1 (\Mod_{\Dol}^{PX} \cap N, \Q  ) \xrightarrow{\delta} \label{eq:MV} \\
 \to H^2(E(1), \Q ) & \to \cdots \notag 
\end{align}
We know that $H^1(E(1), \Q)$ vanishes because it has the structure of a CW-complex only admitting even-dimensional cells.
On the other hand, $\Mod_{\Dol}^{PX} \cap N$ is homotopy equivalent to a plumbed $3$-manifold $Y_{\Dol}^{PX}$ over $F_{\infty}^{PX}$: 
\begin{equation}\label{eq:Dolbeault_plumbed}
   \Mod_{\Dol}^{PX} \cap N \cong Y_{\Dol}^{PX}. 
\end{equation}
To prove the assertion it is clearly sufficient to show that the connecting morphism $\delta$ between cohomology groups is a monomorphism, 
or dually, that the connecting morphism 
$$
  \partial: H_2 ( E(1), \Q ) \to H_1 ( \Mod_{\Dol}^{PX} \cap N , \Q )
$$
on singular homology is an epimorphism. 
According to Table \ref{table:Dol}, for each $X$ the Dynkin diagram of $F_{\infty}^{PX}$ is simply connected. 
As $Y_{\Dol}^{PX}$ is a Seifert fibered $3$-manifold, by~\cite{O} it is known that $H_1 ( \Mod_{\Dol}^{PX} \cap N , \Q )$ is generated by classes $[\gamma_i]$ of normal 
loops around the irreducible components $D_i$ of $F_{\infty}^{PX}$ corresponding to the central node and the leaves of the plumbing tree. 
Let us now recall the definition of $\partial$. Assume given a singular $2$-cycle $C$ in $E(1)$, that decomposes as 
\begin{equation}\label{eq:decomposition}
   C = A + B
\end{equation}
where $A$ and $B$ are singular $2$-chains in $\Mod_{\Dol}^{PX}$ and $N$ respectively. (Such a decomposition always exists using barycentric decomposition.) 
We then let 
$$
  \partial ([C]) = [\partial (A)] = - [\partial (B)]. 
$$
Let now $[\gamma_i]$ be a loop around any component $D_i$ of $F_{\infty}^{PX}$; it is sufficient to show that there exists a $2$-cycle $C_i$ such that 
$$
  \partial ([C_i]) = [\gamma_i]. 
$$
There are two cases to consider. First, $D_i$ may be the component corresponding to central node of $F_{\infty}^{PX}$; 
in this case, the cycle $C_i$ may be chosen as the general fiber of the ruling~\eqref{eq:ruling}. 
Second, $D_i$ may be the proper transform of a fiber of~\eqref{eq:ruling} or an exceptional divisor corresponding to a leaf of $F_{\infty}^{PX}$: 
in this case, $C_i$ may be chosen as the exceptional divisor of the blow-up of one of the points of $D_i$. 
This finishes the proof of vanishing of $b_1$. 

\blue{The assertion for $b_3$ again follows from the corresponding segment of Mayer--Vietoris sequence 
\begin{align*}
 \to H^3(E(1), \Q ) & \to H^3 (\Mod_{\Dol}^{PX} , \Q ) \oplus H^3 (N, \Q ) \to H^3 (\Mod_{\Dol}^{PX} \cap N, \Q  ) \xrightarrow{\delta} \\
 \to H^4(E(1), \Q ) & \to 0.
\end{align*}
Indeed, here $H^3(E(1), \Q ) = 0$ because $E(1)$ is the body of a CW-complex with only even-dimensional cells and  $H^4(E(1), \Q ) \cong \Q$ because $E(1)$ is a smooth oriented connected compact $4$-manifold. 
Moreover, $H^3 (\Mod_{\Dol}^{PX} \cap N, \Q  ) \cong \Q$ because  $\Mod_{\Dol}^{PX} \cap N$ has the smooth oriented compact connected $3$-manifold $Y_{\Dol}^{PX}$ as deformation retract. 
These isomorphisms then show that 
$$
  H^3 (\Mod_{\Dol}^{PX} , \Q ) \oplus H^3 (N, \Q ) \cong 0.
$$
}
\end{proof}
\blue{
In case the fibers of $h$ are not all integral, the usual Leray spectral sequence may have terms supported in dimension $0$. 
However, these extra terms occur in degrees $(2,1)$ and $(0,2)$ and get annihilated by the differential $\mbox{d}_2$. 
Hence, for the purpose of our study we may assume that the eigenvalues of $\theta$ are taken sufficiently generic, so 
that all singular fibers of $h$ are integral. In this case, the usual Leray spectral sequence degenerates at ${}_L E_2$ too. 
Since both ${}_L E_2$ and ${}_L^{\mathfrak{p}} E_2$ abut to $H^{\bullet} (\Mod_{\Dol}^{PX})$ and their terms of degrees $(0,2)$ and $(2,0)$ agree, we deduce the equality 
$$
  {}^{\mathfrak{p}} H^1 (Y, {}^{\mathfrak{p}} R^1 h_* \underline{\C}_{\Mod}) \cong H^1 (Y,  R^1 h_* \underline{\C}_{\Mod}). 
$$
Let us set 
$$
  d^{PX} = \dim_{\C} H^1 (Y, R^1 h_* \underline{\C}_{\Mod^{PX}}) 
$$
(where we have reintroduced the superscript $PX$ of $\Mod$ in order to emphasize the way in which the right-hand side depends on the specific irregular type that we work with). 
By the form of ${}_L^{\mathfrak{p}} E_2^{k,l}$ as showed above and the definition~\eqref{eq:perverse_filtration}, we see that 
\begin{align}
 \dim_{\Q} \Gr^P_{0} H^2 (\Mod_{\Dol}^{PX}, \Q) & = 1 \label{eq:GrP1}\\
 \dim_{\Q} \Gr^P_{1} H^2 (\Mod_{\Dol}^{PX}, \Q) & = d^{PX} \label{eq:GrP3} \\
 \dim_{\Q} \Gr^P_{2} H^2 (\Mod_{\Dol}^{PX}, \Q) & = 1. \label{eq:GrP2}
\end{align}
Moreover, we have 
\begin{equation}\label{eq:generatorP1}
 P^{1} H^2 (\Mod_{\Dol}^{PX}, \Q) = \Ker ( j^*  )
\end{equation}
where $Y_{-1} \in Y$ is a generic point in the Hitchin base and the morphism 
$$
  j^* : H^2 (\Mod_{\Dol}^{PX}, \Q) \to H^2 ( h^{-1} (Y_{-1}) , \Q) 
$$
is the induced morphism in cohomology by the inclusion 
$$
  j: h^{-1} (Y_{-1}) \to \Mod_{\Dol}^{PX}.
$$
}
Let us denote Euler-characteristic of a topogical space by $\chi$. 
\begin{lem}\label{lem:dPXF}
 We have 
 $$
  d^{PX} = 10 - \chi (F_{\infty}^{PX}). 
 $$
\end{lem}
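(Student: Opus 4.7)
The plan is to compute $\chi(\Mod_{\Dol}^{PX})$ in two independent ways and equate the results. The first computation, via the Leray spectral sequence already set up in the body of the proof of Proposition~\ref{prop:Dol}, expresses $\chi(\Mod_{\Dol}^{PX})$ in terms of $d^{PX}$; the second, via the embedding $\Mod_{\Dol}^{PX} \hookrightarrow E(1)$ of \eqref{eq:elliptic_surface}, expresses it in terms of $\chi(F_\infty^{PX})$.

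For the first computation, I would simply read off the Betti numbers of $\Mod_{\Dol}^{PX}$ from the degenerate $E_2$-page of the Leray spectral sequence displayed above. The only a priori non-vanishing entries are ${}_L E_2^{0,0} = \C$, ${}_L E_2^{0,2} = \C$, ${}_L E_2^{0,1} = \C^{b_1(\Mod)}$, and ${}_L E_2^{1,1} = \C^{d^{PX}}$; the entry ${}_L E_2^{0,1}$ vanishes by Lemma \ref{lem:b_1}. Combined with $b_4(\Mod_{\Dol}^{PX}) = 0$ (Poincar\'e duality for a non-compact oriented $4$-manifold, as recalled at the opening of this proof), this yields $b_0 = 1$, $b_1 = 0$, $b_2 = 1 + d^{PX}$, $b_3 = 0$, $b_4 = 0$, and hence
$$
  \chi(\Mod_{\Dol}^{PX}) = 2 + d^{PX}.
$$

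For the second computation, I would use the embedding $\Mod_{\Dol}^{PX} \hookrightarrow E(1)$ with closed complement $F_\infty^{PX}$ together with additivity of the compactly supported Euler characteristic. Since $\Mod_{\Dol}^{PX}$ is an oriented manifold, Poincar\'e duality gives $\chi_c(\Mod_{\Dol}^{PX}) = \chi(\Mod_{\Dol}^{PX})$, so
$$
  \chi(E(1)) = \chi(\Mod_{\Dol}^{PX}) + \chi(F_\infty^{PX}).
$$
Since $\chi(E(1)) = 12$ for the rational elliptic surface \eqref{eq:elliptic_surface}, this gives $\chi(\Mod_{\Dol}^{PX}) = 12 - \chi(F_\infty^{PX})$. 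Alternatively, one could reuse the Mayer--Vietoris setup of Lemma \ref{lem:b_1} (noting that $\Mod_{\Dol}^{PX} \cap N$ is homotopy equivalent to a circle bundle over $F_\infty^{PX}$ and hence has vanishing Euler characteristic) to reach the same conclusion.

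Equating the two expressions for $\chi(\Mod_{\Dol}^{PX})$ gives $d^{PX} = 10 - \chi(F_\infty^{PX})$. The argument is essentially a bookkeeping exercise once the geometric input of the preceding lemmas is in place; the only mildly delicate point is the careful accounting of the vanishings on the $E_2$-page of the Leray spectral sequence, which has however already been carried out in the preceding discussion and is therefore not a genuine obstacle.
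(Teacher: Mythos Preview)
Your proof is correct and follows essentially the same route as the paper: both compute $\chi(\Mod_{\Dol}^{PX})$ from the already-established $E_2$ page of the Leray spectral sequence (using Lemma~\ref{lem:b_1}) and then invoke additivity of the Euler characteristic for the decomposition $E(1) = \Mod_{\Dol}^{PX} \sqcup F_{\infty}^{PX}$ together with $\chi(E(1)) = 12$. The paper's write-up is a bit more compressed, recording only $1 + d^{PX} = b_2$ and then writing $b_0 + b_2 + \chi(F_{\infty}^{PX}) = 12$, but the content is identical to yours.
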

\begin{proof}
We see from degeneration of the perverse Leray spectral sequence at ${}_L^{\mathfrak{p}} E_2$ that  
\begin{equation}\label{eq:dPXb_2}
   1 + d^{PX} = b_2 (\Mod_{\Dol}^{PX}).  
\end{equation} 
By Lemma \ref{lem:b_1} and additivity of $\chi$ with respect to stratifications we deduce 
$$
  b_0 (\Mod_{\Dol}^{PX}) + b_2 (\Mod_{\Dol}^{PX}) + \chi (F_{\infty}^{PX}) = \chi (E(1)) = 12. 
$$
The assertion follows because $\Mod_{\Dol}^{PX}$ is connected. 
\end{proof}
This Lemma and~\eqref{eq:GrP1},~\eqref{eq:GrP3}, \eqref{eq:GrP2} coupled with \eqref{eq:dPXb_2} finish the proof of the Proposition. 
\end{proof} 

\begin{remark}
 \blue{An alternative method of proof would be to make use of the relationship between the flag filtration and the perverse filtration~\cite[Theorem~4.1.1]{dCM}. 
 In our case however, the direct method works just as well. 
 The lattice $H^1 (Y, R^1 h_* \underline{\Z}_{\Mod})$ is isomorphic to the generic Mordell--Weil group $MW^{\sigma}(\tilde{h})$ 
 of the identity component $E(1)^{\sigma}$ (singled out by a fixed section $\sigma$) of a N\'eron model of $\tilde{h}\colon E(1) \to \CP1$. 
 Indeed, considering a cell decomposition of $\CP1$ such that no $1$-cell contains $\infty$ we see that 
 $$
 H^1 (Y, R^1 h_* \underline{\Z}_{\Mod}) \cong H^1 (\CP1 , R^1 h_* \underline{\Z}_{E(1)}). 
 $$ 
 Now, Kodaira's short exact sequence \cite[Section~V.9]{BHPV}
 $$
  0 \to R^1 h_* \underline{\Z}_{E(1)} \to R^1 h_* \O_{E(1)} \to \O_{\CP1} (E(1)^{\sigma}) \to 0
 $$
 gives rise to the cohomology long exact sequence 
 \begin{align*}
   \cdots \to & H^0 (\CP1 , R^1 h_* \O_{E(1)}) \to H^0 (\CP1 , \O_{\CP1} (E(1)^{\sigma}) ) \to  \\ 
   \to H^1 (\CP1 , R^1 h_* \underline{\Z}_{E(1)})\to  &  H^1 (\CP1 , R^1 h_* \O_{E(1)}). 
 \end{align*}
 In this sequence the two extremal terms vanish by relative duality \cite[Section~III.12]{BHPV}
 $$
 R^1 h_* \O_{E(1)} \cong (R^0 h_* \omega_{E(1)/ \CP1})^{\vee} \cong R^0 h_* K_{E(1)}^{\vee} \otimes K_{\CP1}, 
 $$
 the K\"unneth formula and rationality of $E(1)$. Therefore, for generic choices of the eigenvalues (so that the singular fibres of $\Mod_{\Dol}^{PX}$ are irreducible) 
 Proposition \ref{prop:Dol} also follows from the Shioda--Tate formula \cite{Sh1, Sh2, Tate}, which in our case reads 
 $$
  \mbox{rank}_{\Z} (MW^{\sigma}(\tilde{h})) = \mbox{rank}_{\Z} (NS(E(1))) - 1 - \mbox{rank}_{\Z} H^2(F_{\infty}^{PX}, \Z), 
 $$
 because the N\'eron--Severi group $NS(E(1))$ of $E(1)$ satisfies (again using rationality) 
 $$
  NS(E(1)) \cong H^2 (E(1), \Z) \cong \Z^{10}.
 $$
 }
\end{remark}

\section{Weight filtration}\label{sec:weight}
We now turn our attention to the right hand side of \eqref{eq:PW}. 
Observe first that according to \cite{PS}, for all $X$ the space $\Mod_{\Betti}^{PX}$ is a smooth affine cubic surface defined by a polynomial 
\begin{equation}\label{eq:cubic}
  f^{PX} (x_1, x_2, x_3) = x_1 x_2 x_3 + Q^{PX} (x_1, x_2, x_3) 
\end{equation}
for an affine quadric $Q^{PX}$. Each of these quadrics depends on some subset (possibly empty) of complex parameters 
\begin{equation}\label{eq:parameters}
   s_0, s_1, s_2, s_3, \alpha, \beta .
\end{equation}
For a generic choice of these parameters, that we will assume from now on, the obtained affine cubic surfaces are smooth. 
Moreover, in case the cubics do not depend on any parameter, the affine cubic surfaces are always smooth. 
Denote by 
$$
  F^{PX} \in \C [x_0, x_1, x_2, x_3]
$$
the homogenization of $f^{PX}$ as a homogeneous cubic polynomial and consider the projective surface 
\begin{equation}\label{eq:compactifcation_Betti}
 \overline{\Mod}_{\Betti}^{PX} = \mbox{Proj} ( \C [x_0, x_1, x_2, x_3] / (F^{PX}) ), 
\end{equation}
which is a compactification of ${\Mod}_{\Betti}^{PX}$. 
In general, $\overline{\Mod}_{\Betti}^{PX}$ is not smooth: it has some isolated singularities over $x_0 = 0$. 
Let us set 
\begin{equation}\label{eq:NPX}
   N^{PX} = \sum_{p} \mu (p )
\end{equation}
where $\mu$ stands for the Milnor number of an isolated surface singularity and the summation ranges over all singular points of $\overline{\Mod}_{\Betti}^{PX}$. 

\begin{prop}\label{prop:Betti}
 The non-trivial graded pieces of $H^*$ for $W$ are 
\begin{align*}
 \Gr^W_0 H^0 (  \Mod_{\Betti}^{PX} ) & \cong \C \\ 
 \Gr^W_{2} H^2 (\Mod_{\Betti}^{PX}, \C ) 
	& \cong  \C^{4 - N^{PX}} \\
 \Gr^W_{4} H^2 (\Mod_{\Betti}^{PX}, \C ) 
	  & \cong \C .
\end{align*}
In particular, we have 
$$
  b_2 ( \Mod_{\Betti}^{PX} ) = 5 - N^{PX}
$$
and 
$$
  WH^{PX}(q,t) = 1 + ( 4 - N^{PX} ) q t^2 + q^{2} t^2. 
$$
\end{prop}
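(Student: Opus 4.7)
The plan is to exploit the smooth compactification provided by Theorem~\ref{thm:Simpson} together with Deligne's mixed Hodge theory and a single Euler characteristic count, reducing everything to two invariants of the boundary: the total Milnor number $N^{PX}$ and the homotopy type of the nerve complex.

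First I would fix a resolution $\pi\colon \widetilde{\Mod}_{\Betti}^{PX}\to\overline{\Mod}_{\Betti}^{PX}$ of the isolated singularities of the projective closure. The leading term $x_1 x_2 x_3$ of $f^{PX}$ shows that $\overline{\Mod}_{\Betti}^{PX}\cap\{x_0 = 0\}$ is the triangle cut out by the three coordinate lines, meeting pairwise in the three vertices of the triangle, and all singularities of $\overline{\Mod}_{\Betti}^{PX}$ lie on this triangle. After resolution, $\widetilde{D}:=\widetilde{\Mod}_{\Betti}^{PX}\setminus \Mod_{\Betti}^{PX}$ is a simple normal crossing divisor whose irreducible components are the proper transforms of the three lines together with the exceptional $\pi$-divisors, all of them rational curves. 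By construction the dual complex of $\widetilde{D}$ is the nerve $\mathcal{N}^{PX}$ of Theorem~\ref{thm:Simpson}, whose body is homotopy equivalent to $S^1$.

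Second, I would count Betti numbers via Euler characteristics. A smooth cubic surface in $\CP3$ has $\chi = 9$, and by Milnor's formula for isolated hypersurface singularities on a surface,
$$
  \chi(\overline{\Mod}_{\Betti}^{PX}) = 9 - N^{PX}.
$$
Since the triangle at infinity has $\chi = 3\cdot 2 - 3 = 3$, additivity of $\chi$ gives $\chi(\Mod_{\Betti}^{PX}) = 6 - N^{PX}$. A smooth affine surface satisfies $b_k = 0$ for $k\geq 3$ by Artin vanishing, so it only remains to show $b_1 = 0$; I would verify this by combining $H^1(\widetilde{\Mod}_{\Betti}^{PX}) = 0$ (rationality of the resolved cubic) with the fact that the weight-$2$ part of $H^1$ is governed by classes in $H^0$ of double intersections of $\widetilde{D}$ modulo $H^0$ of single components, which vanishes once $|\mathcal{N}^{PX}|$ is connected. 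Together with connectedness of $\Mod_{\Betti}^{PX}$ this yields $b_2(\Mod_{\Betti}^{PX}) = 5 - N^{PX}$.

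Third, I would identify the weight pieces of $H^2$. Deligne's weight spectral sequence
$$
 E_1^{-p,q} = H^{q-2p}(\widetilde{D}^{(p)}, \C)\Rightarrow H^{q-p}(\Mod_{\Betti}^{PX}, \C)
$$
identifies the top-weight piece with the reduced cohomology of the dual complex, so by Theorem~\ref{thm:Simpson} one has
$$
  \dim \Gr^W_{-4} H^2(\Mod_{\Betti}^{PX},\C) = \dim \widetilde{H}^1(|\mathcal{N}^{PX}|, \C) = 1.
$$
The intermediate piece $\Gr^W_{-3}$ vanishes because every component of $\widetilde{D}$ is rational, so all of $H^1(\widetilde{D}^{(1)})$ is zero. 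The remainder must therefore live in $\Gr^W_{-2}$, forcing $\dim \Gr^W_{-2} H^2 = (5 - N^{PX}) - 1 = 4 - N^{PX}$, supported on the image of $H^2(\widetilde{\Mod}_{\Betti}^{PX})\to H^2(\Mod_{\Betti}^{PX})$. Assembling yields the stated polynomial.

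The main obstacle is handling the resolution $\pi$ uniformly across all nine Painlev\'e cases, since both the number and the nature of the singularities of $\overline{\Mod}_{\Betti}^{PX}$ depend on $X$. The saving point is that the only data of the resolution entering the final answer are the total Milnor number $N^{PX}$ (through the Euler characteristic) and the homotopy type of $|\mathcal{N}^{PX}|$ (through the top-weight piece), both of which are already captured in a case-independent form, so the remaining work is the numerical determination of $N^{PX}$ in each Painlev\'e case.
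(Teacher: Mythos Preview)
Your overall strategy---resolving $\overline{\Mod}_{\Betti}^{PX}$, running Deligne's weight spectral sequence, and reading off $\Gr^W_{-4}H^2\cong \widetilde{H}^1(|\mathcal{N}^{PX}|,\C)$ and $\Gr^W_{-3}H^2=0$ from rationality of the components---matches the paper's approach, and your Euler-characteristic computation of $\chi(\Mod_{\Betti}^{PX})=6-N^{PX}$ is a clean alternative to the paper's direct determination of $b_2(\widetilde{\Mod}_{\Betti}^{PX})=7$. There is, however, a minor circularity: you cite Theorem~\ref{thm:Simpson} for $|\mathcal{N}^{PX}|\simeq S^1$, but in the paper that statement is deduced from equation~\eqref{eq:nerve}, which is established inside the proof of this very proposition. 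This is easily repaired by arguing directly that resolving $A_k$ points at the vertices of a triangle inserts chains and yields a cycle.

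The substantive gap is your argument for $b_1(\Mod_{\Betti}^{PX})=0$. The weight-$2$ piece of $H^1$ is not ``$H^0$ of double intersections modulo $H^0$ of single components''; that quotient is $H^1(|\mathcal{N}^{PX}|)$, which contributes to $\Gr^W_{-4}H^2$, not to $H^1$. What actually sits in $\Gr^W_{-2}H^1$ is $\ker\bigl(\delta_2:\bigoplus_{L}H^0(L,\C)\to H^2(\widetilde{\Mod}_{\Betti}^{PX},\C)\bigr)$, i.e.\ the space of linear relations among the classes $[L]$ of the boundary components. Connectedness of the nerve says nothing about this; indeed, since $|\mathcal{N}^{PX}|\simeq S^1$, the intersection matrix of the boundary components is only negative \emph{semi}-definite with one-dimensional radical spanned by $(1,\ldots,1)$, so one must still rule out the relation $\sum_L [L]=0$ in $H^2(\widetilde{\Mod}_{\Betti}^{PX})$. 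The paper does exactly this in Lemma~\ref{lem:delta_2}: after blowing up three further points one obtains an elliptic fibration on $E(1)$ in which $\sum_L[\tilde{L}]$ is the class of a fibre, hence pairs nontrivially with any section (or with the hyperplane class). Without this step, neither $b_1=0$ nor the dimension count $\dim\Gr^W_{-2}H^2=4-N^{PX}$ is justified.
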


The singularities of $\overline{\Mod}_{\Betti}^{PX}$ and the weight polynomial of ${\Mod}_{\Betti}^{PX}$ in the various cases are summarized in Table \ref{table:B}. 

\begin{table}
\begin{center}
\begin{tabular}{|l|l|r|}
 \hline 
 $X$ & Singularities of $\overline{\Mod}_{\Betti}^{PX}$ & $WH^{PX}(q,t)$ \\
 \hline 
 \hline 
 $VI$ & $\emptyset$ & $1 + 4 q t^2 + q^{2} t^2$ \\
 \hline 
 $V$ & $A_1$ & $1 + 3 q t^2 + q^{2} t^2$ \\ 
 \hline 
 $V_{\degen}$ & $A_2$ & $1 + 2 q t^2 + q^{2} t^2$ \\
 \hline 
 $III(D6)$ & $A_2$ & $1 + 2 q t^2 + q^{2} t^2$ \\ 
 \hline 
 $III(D7)$ & $A_3$ & $1 + q t^2 + q^{2} t^2$ \\
 \hline 
 $III(D8)$ 
					& $A_4$ & $1 +   q^{2} t^2$ \\ 
 \hline 
 $IV$ & $A_1 + A_1$ & $1 + 2 q t^2 + q^{2} t^2$ \\
 \hline 
 $II$ & $A_1 + A_1 + A_1$ & $1 + q t^2 + q^{2} t^2$ \\ 
 \hline 
 $I$ & $A_2 + A_1 + A_1$ & $1 + q^{2} t^2$ \\
 \hline 
\end{tabular}
\end{center}
 \caption{Singularities of $\overline{\Mod}_{\Betti}^{PX}$ and weight Hodge polynomial of ${\Mod}_{\Betti}^{PX}$}
 \label{table:B}
\end{table}

\begin{proof}
We use the definition given in \cite{DelHodge2} of the weight filtration on the mixed Hodge structure on the cohomology of an affine variety in terms of a smooth projective compactification. 
The form \eqref{eq:cubic} of $f^{PX}$ implies that the compactifying divisor 
$$
   D = \overline{\Mod}_{\Betti}^{PX} \setminus {\Mod}_{\Betti}^{PX} = (x_0) \cap (F^{PX}) \subset \CP{2}
$$
in $\overline{\Mod}_{\Betti}^{PX}$ is defined by the equation 
$$
  x_1 x_2 x_3, 
$$
so 
\begin{equation}\label{eq:D}
  D = L_1 \cup L_2 \cup L_3
\end{equation}
where each $L_i$ is a complex projective line such that each two of them $L_i, L_j$ for $i< j$ intersect each other transversely in a point $p_{ij}$. 
Said differently, the nerve complex of $D$ consists of the edges (and vertices) of a triangle \blue{$A_2^{(1)}$}. In particular, the body of this complex is homeomorphic to a circle $S^1$. 
As we will see in Subsections \ref{subsec:BVI}--\ref{subsec:BI}, all singularities of $\overline{\Mod}_{\Betti}^{PX}$ are located at some of the points $p_{ij}$ and are of type $A_k$ for $k = \mu(p_{ij} )$. 
We obtain a smooth compactification $\widetilde{\Mod}_{\Betti}^{PX}$ of $\Mod_{\Betti}^{PX}$ by taking the minimal resolution of the singularities $p_{ij}$ of $\overline{\Mod}_{\Betti}^{PX}$. 
It follows that there exists a smooth compactification $\widetilde{\Mod}_{\Betti}^{PX}$ of $\Mod_{\Betti}^{PX}$ by a normal crossing divisor 
\begin{equation}\label{eq:compactifying_divisor}
   D^{PX}
\end{equation}
consisting of reduced projective lines. We know that $D^{PX}$ contains the proper transform of each component of \eqref{eq:D}. 
\blue{More precisely, the nerve complex $\mathcal{N}^{PX}$ of $D^{PX}$ arises from the graph $A_2^{(1)}$ of \eqref{eq:D} by replacing the edge corresponding to an intersection point $p_{ij}$ 
by a diagram $A_{\mu(p_{ij} )}$. 
On the other hand, the generic plane section of $\widetilde{\Mod}_{\Betti}^{PX}$ is a cubic curve, therefore the nerve complex $ \mathcal{N}^{PX}$ must appear on Kodaira's list 
given in Subsection~\ref{ssec:elliptic} (up to modifying the self-intersection numbers by blow-ups necessary to eliminate the singular points). 
From this we see that the nerve complex of $D^{PX}$ is a cycle of length $N^{PX} + 3$ 
\begin{equation}\label{eq:nerve}
   \mathcal{N}^{PX} = C_{N^{PX} + 3}.
\end{equation}
For more details and the self-intersection numbers see~\cite[Lemma~1]{N-Sz}.}
As customary, we will denote by $\mathcal{N}^{PX}_0$ and $\mathcal{N}^{PX}_1$ the set of $0$- and $1$-dimensional cells of $\mathcal{N}^{PX}$, respectively. 

We are now ready to determine the Betti numbers of $\widetilde{\Mod}_{\Betti}^{PX}$. 

\begin{lem}\label{lem:b_2}
 We have 
 \begin{align*}
  b_0 \left( \widetilde{\Mod}_{\Betti}^{PX} \right) & = 1 = b_4 \left( \widetilde{\Mod}_{\Betti}^{PX} \right) \\ 
  b_1 \left( \widetilde{\Mod}_{\Betti}^{PX} \right) & = 0 = b_3 \left( \widetilde{\Mod}_{\Betti}^{PX} \right) \\
  b_2 \left( \widetilde{\Mod}_{\Betti}^{PX} \right) & = 7. 
 \end{align*}
\end{lem}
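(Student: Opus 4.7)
The plan is to reduce Lemma~\ref{lem:b_2} to the known topology of a smooth cubic surface in $\CP{3}$, via the observation that the minimal resolution of isolated $A_k$ (du~Val) singularities does not alter the total Euler characteristic. Recall that every smooth complex cubic surface in $\CP{3}$ is rational, with Betti numbers $(1,0,7,0,1)$ and Euler characteristic~$9$.

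First I would compute $\chi\bigl(\widetilde{\Mod}_{\Betti}^{PX}\bigr) = 9$ in two stages. Deforming $\overline{\Mod}_{\Betti}^{PX}$ inside the linear system of cubic surfaces to a smooth member, and using that an isolated surface hypersurface singularity with Milnor number $\mu$ has contractible local cone (Euler characteristic $1$) but Milnor fibre of Euler characteristic $1+\mu$, additivity of $\chi$ across this degeneration yields
$$\chi\bigl(\overline{\Mod}_{\Betti}^{PX}\bigr) = 9 - N^{PX}.$$
The minimal resolution $\widetilde{\Mod}_{\Betti}^{PX}\to\overline{\Mod}_{\Betti}^{PX}$ then replaces each $A_k$ singular point (Euler characteristic $1$) by a chain of $k$ copies of $\CP{1}$ meeting transversally (Euler characteristic $k+1$), so the net local change in $\chi$ is $+k$; summing over the singular points gives
$$\chi\bigl(\widetilde{\Mod}_{\Betti}^{PX}\bigr) = \chi\bigl(\overline{\Mod}_{\Betti}^{PX}\bigr) + N^{PX} = 9.$$

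To conclude, I would observe that $\widetilde{\Mod}_{\Betti}^{PX}$ is a smooth rational projective surface: $\overline{\Mod}_{\Betti}^{PX}$ is a cubic surface in $\CP{3}$, hence rational over $\C$, and the crepant minimal resolution of du~Val singularities preserves rationality. This forces $h^{0,1}=0$, so $b_1 = 0$, and Poincar\'e duality on the smooth compact oriented $4$-manifold $\widetilde{\Mod}_{\Betti}^{PX}$ then gives $b_3 = 0$. Connectedness of the resolution yields $b_0 = b_4 = 1$, and $b_2 = \chi - 2 = 7$ follows.

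The main obstacle is an input rather than an argument: one must verify case by case that every singular point of $\overline{\Mod}_{\Betti}^{PX}$ coming from the specific cubic $f^{PX}$ is indeed of type $A_k$ with $k = \mu(p_{ij})$ as listed in Table~\ref{table:B}, together with the tabulated total Milnor numbers $N^{PX}$. Once this identification is granted, the Euler-characteristic bookkeeping above is uniform across all nine Painlev\'e cases; this local input is provided by the case-by-case analysis of Subsections~\ref{subsec:BVI}--\ref{subsec:BI}.
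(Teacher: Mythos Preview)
Your proof is correct, but it takes a different route from the paper's. The paper's argument is a single stroke: for a projective surface with only $ADE$ singularities, a smoothing is \emph{diffeomorphic} to the minimal resolution, and a smoothing of $\overline{\Mod}_{\Betti}^{PX}$ inside the family of cubic surfaces is a smooth cubic, whose Betti numbers $(1,0,7,0,1)$ are classical. This yields all five Betti numbers simultaneously, with no separate treatment of $b_1$ or $\chi$.

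Your approach instead decouples the problem into (i) an Euler-characteristic count and (ii) a rationality argument for $b_1=0$. Both steps are valid, and together they recover the same conclusion without invoking the smoothing--resolution diffeomorphism. One small remark: in your bookkeeping the quantity $N^{PX}$ cancels, so you do not actually need the tabulated Milnor numbers from Subsections~\ref{subsec:BVI}--\ref{subsec:BI}; all that is required is that the singularities are du~Val (so that the exceptional locus of the minimal resolution is a tree of $\mu$ rational curves, contributing $+\mu$ to $\chi$). The paper's proof likewise only needs the $ADE$ type, not the specific values. In this sense your final paragraph slightly overstates the dependence on the case analysis.
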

\begin{proof}
 The assertion for $b_0$ is obvious, and then immediately follows by Poincar\'e duality for $b_4$ too. 

 In case $N^{PX} = 0$, i.e. $\overline{\Mod}_{\Betti}^{PX}$ is a smooth projective cubic surface, it is known that $\overline{\Mod}_{\Betti}^{PX}$ is given by a blow-up of 
 $\CP2$ in six different points, and so carries the structure of a CW-complex with only even-dimensional cells, with $7$ two-dimensional cells. 

The non-smooth surfaces $\overline{\Mod}_{\Betti}^{PX}$ clearly belong to the $20$-dimensional family of projective cubic surfaces. 
The points parameterizing smooth cubics form a dense set in $\C^{20}$ with respect to the analytic topology. 
We will see in Subsections \ref{subsec:BVI}--\ref{subsec:BI} that the spaces $\overline{\Mod}_{\Betti}^{PX}$ only admit singularities of type $A_k$. 
It is known that a smoothing of a projective surface with $ADE$ singularities coincides up to diffeomorphism with a minimal resolution thereof. 
In our case, a smoothing is a smooth cubic surface. The smooth case treated in the previous paragraph therefore implies the general statement. 
\end{proof}

Now, \cite[Th\'eor\`eme~$(3.2.5)$]{DelHodge2} implies that there exists a spectral sequence 
$$
    {}_W E_1^{-n,k+n} = H^{k-n} (\tilde{Y}^n) \Rightarrow H^k (\Mod_{\Betti}^{PX}, \C )
$$
endowed with the weight filtration, with first page ${}_W E_1$ of the form 
$$
  \xymatrix{
    k + n = 4 & \oplus_{p \in \mathcal{N}^{PX}_1} H^0 (p, \C ) \ar[r]^{\delta} & \oplus_{L \in \mathcal{N}^{PX}_0} H^2 (L, \C ) \ar[r]^{\delta_4} & H^4 \left( \widetilde{\Mod}_{\Betti}^{PX}, \C \right) \\
    k + n = 3 & 0 & \oplus_{L \in \mathcal{N}^{PX}_0} H^1 (L, \C ) \ar[r]^{\delta_3} &  H^3 \left( \widetilde{\Mod}_{\Betti}^{PX}, \C \right)  \\
    k + n = 2 & 0 & \oplus_{L \in \mathcal{N}^{PX}_0} H^0 (L, \C ) \ar[r]^{\delta_2} &  H^2 \left( \widetilde{\Mod}_{\Betti}^{PX}, \C \right)  \\
    k + n =   1 & 0 & 0 & H^1 \left( \widetilde{\Mod}_{\Betti}^{PX}, \C \right)  \\
    k + n =   0 & 0 & 0 & H^0 \left( \widetilde{\Mod}_{\Betti}^{PX}, \C \right) \\
    & -n= -2 & -n= -1 & -n= 0
    }
$$
Let us list a few properties (either obvious or directly following from \cite[Th\'eor\`eme~$(3.2.5)$]{DelHodge2}) related to this spectral sequence. 
\begin{enumerate}
 \item In the notation of \cite{DelHodge2}, we have $\tilde{Y}^1 = \coprod_{L \in \mathcal{N}^{PX}_0} L$ and $\tilde{Y}^2 = \coprod_{p \in \mathcal{N}^{PX}_1} p$. 
 \item The sequence degenerates at ${}_W E_2$. 
 \item The filtration $W_N$ is induced by $n \leq N$ on the above diagram, and its shifted filtration $W[k]$ defines the mixed Hodge structure on $H^k (\Mod_{\Betti}^{PX}, \C )$. 
 \item Up to identifying $H^2 (L, \C )$ with $H^0 (L, \C )$ via the Lefschetz operator, the map $\delta$ is the differential of the simplicial complex $\mathcal{N}^{PX}$. 
  In particular, as the body of $\mathcal{N}^{PX}$ is homeomorphic to $S^1$, we have 
  \begin{equation}\label{eq:ker(delta)}
    \dim_{\C} \Ker (\delta ) = 1 = \dim_{\C} \Coker (\delta).
  \end{equation}
 \item The morphisms $\delta_2, \delta_4$ are induced by the Thom morphism of the normal bundles of the subvarieties $L \hookrightarrow \widetilde{\Mod}_{\Betti}^{PX}$. 
 \item We have ${}_W E_1^{-1,3} = 0$ because the components $L$ are $2$-spheres, in particular simply connected. \label{property:E_1^(-1,3)=0}
 \item Lemma \ref{lem:b_2} shows that we have ${}_W E_1^{0,3} = 0 = {}_W E_1^{0,1}$, so the entire rows $k+n = 1$ and $k+n = 3$ are $0$. 
\end{enumerate}

\begin{lem}\label{lem:delta4}
 The morphism $\delta_4$ is an epimorphism. 
\end{lem}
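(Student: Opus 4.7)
The plan is to identify the map $\delta_4$ with the direct sum of Gysin (Thom) pushforward maps $i_{L,*} \colon H^2(L,\C) \to H^4(\widetilde{\Mod}_{\Betti}^{PX},\C)$ ranging over components $L \in \mathcal{N}^{PX}_0$, and then to show that each such pushforward is already surjective onto the one-dimensional top cohomology. The identification is precisely the content of property (5) listed after the spectral sequence above: $\delta_2$ and $\delta_4$ are induced by the Thom morphisms of the normal bundles of the inclusions $L \hookrightarrow \widetilde{\Mod}_{\Betti}^{PX}$, and for the embedding of a smooth complex codimension-one submanifold into a smooth compact complex manifold, the Thom map and the Gysin pushforward coincide.

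Once this identification is in hand, the proof reduces to a direct computation on a single summand. Fix any component $L \in \mathcal{N}^{PX}_0$; it is a smooth rational curve, so $H^2(L,\C) \cong \C$ is generated by the fundamental cohomology class $[L]^{\vee}$, i.e.\ the Poincar\'e dual of a point on $L$. Because the Gysin map is Poincar\'e-dual to pushforward on singular homology, $i_{L,*}$ sends this generator to $PD_{\widetilde{\Mod}_{\Betti}^{PX}}^{-1}\bigl(i^{\mathrm{hom}}_{L,*}[pt]_L\bigr)$, which is the Poincar\'e dual of a point in $\widetilde{\Mod}_{\Betti}^{PX}$. By Lemma~\ref{lem:b_2} we have $b_4(\widetilde{\Mod}_{\Betti}^{PX}) = 1$, and this dual-of-a-point class generates $H^4(\widetilde{\Mod}_{\Betti}^{PX},\C)$. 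Hence $i_{L,*}$ on its own is surjective, and a fortiori $\delta_4$ is surjective.

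The only delicate point, which amounts to bookkeeping, is to confirm that Deligne's abstract differential on the first page of the weight spectral sequence coincides on the nose (up to an irrelevant sign convention) with the topological Gysin pushforward. This is built into the construction of \cite[Th\'eor\`eme~(3.2.5)]{DelHodge2}, where the complex computing the first page is literally assembled from Gysin maps on the cohomologies of the strata $\tilde{Y}^n$, so no genuine obstacle arises here.
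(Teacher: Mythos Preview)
Your argument is correct and coincides with the second of the two proofs the paper gives: the paper also identifies $\delta_4$ with the Thom (Gysin) map and observes that the image of a generator of $H^2(L,\C)$ is represented by $\omega_L \wedge \Phi_L$, a nonzero multiple of a volume form on the tubular neighbourhood $N_L$, hence a generator of $H^4(\widetilde{\Mod}_{\Betti}^{PX},\C)$. Your Poincar\'e-duality phrasing (pushforward of a point) is the same computation in dual language.

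The paper additionally records a shorter indirect argument that you did not use: since the spectral sequence degenerates at ${}_W E_2$ and the target of $\delta_4$ is one-dimensional, if $\delta_4$ were zero the term $H^4(\widetilde{\Mod}_{\Betti}^{PX},\C)\cong\C$ would survive to $E_\infty$, forcing $H^4(\Mod_{\Betti}^{PX},\C)\neq 0$; but $\Mod_{\Betti}^{PX}$ is a non-compact oriented $4$-manifold, so its $H^4$ vanishes. This avoids invoking the explicit description of the differential altogether.
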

\begin{proof}
Assuming that $\delta_4$ vanish, the term $H^4 \left( \widetilde{\Mod}_{\Betti}^{PX}, \C \right)$ in the spectral sequence could not be annihilated at any further page by any other term, so we 
would get 
$$
  H^4 \left( {\Mod}_{\Betti}^{PX}, \C \right) \neq 0. 
$$
This, however, would contradict that ${\Mod}_{\Betti}^{PX}$ is an oriented, non-compact $4$-manifold. 

It is also easy to derive the result directly using the explicit description of $\delta_4$ as wedge product by the Thom class $\Phi_L$ of the tubular neighbourhood $N_L$ 
of $L$ in $\widetilde{\Mod}_{\Betti}^{PX}$, as in Lemma \ref{lem:delta_2} below. 
Indeed, the image of the class of a generator $[\omega_L]$ of $H^2(L, \C)$ is then represented in $N_L$ by the compactly supported $4$-form 
$$
  \delta_4 ( [\omega_L] ) =  [\omega_L \wedge \Phi_L].
$$
The normal bundle of $L$ in $\widetilde{\Mod}_{\Betti}^{PX}$ is orientable, and the above $4$-form is cohomologous to a positive multiple of a volume form of $N_L$. 
This implies the assertion. 
\end{proof}

The Lemma and \eqref{eq:ker(delta)} now imply that in the top row of ${}_W E_2$ the only non-vanishing term will be the upper-left entry, and it is of dimension $1$.  

\begin{lem} \label{lem:delta_2}
The morphism $\delta_2$ is a monomorphism. 
\end{lem}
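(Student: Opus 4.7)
My plan is to identify $\Ker(\delta_2)$ with $H^1(\Mod_{\Betti}^{PX}, \C)$ via Lefschetz duality, and then show this kernel vanishes using non-abelian Hodge theory together with Lemma~\ref{lem:b_1}.

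First I would unpack $\delta_2$ geometrically: it is the Gysin pushforward sending $1_L \in H^0(L, \C)$ to the Poincar\'e dual of $[L]$ in $H^2(\widetilde{\Mod}_{\Betti}^{PX}, \C)$ for each component $L \in \mathcal{N}^{PX}_0$. Thus injectivity of $\delta_2$ is equivalent to linear independence in cohomology of the classes $\{[L]\}_{L \in \mathcal{N}^{PX}_0}$.

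Next I would apply Lefschetz duality for the smooth closed oriented $4$-manifold $\widetilde{\Mod}_{\Betti}^{PX}$ and its closed subspace $D^{PX}$ to obtain
$$
  H^k(\widetilde{\Mod}_{\Betti}^{PX}, \Mod_{\Betti}^{PX}; \C) \cong H_{4-k}(D^{PX}, \C).
$$
A Mayer--Vietoris computation on $D^{PX}$, a cycle of $n = N^{PX} + 3$ copies of $\CP1$, yields $H_*(D^{PX}, \C) = (\C, \C, \C^n)$, the top group spanned by the fundamental classes of the components. Under these identifications, the natural map $H^2(\widetilde{\Mod}_{\Betti}^{PX}, \Mod_{\Betti}^{PX}; \C) \to H^2(\widetilde{\Mod}_{\Betti}^{PX}, \C)$ from the long exact sequence of the pair coincides with $\delta_2$. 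Combined with $H^1(\widetilde{\Mod}_{\Betti}^{PX}, \C) = 0$ from Lemma~\ref{lem:b_2}, this long exact sequence reads
$$
  0 \to H^1(\Mod_{\Betti}^{PX}, \C) \to H^2(\widetilde{\Mod}_{\Betti}^{PX}, \Mod_{\Betti}^{PX}; \C) \xrightarrow{\delta_2} H^2(\widetilde{\Mod}_{\Betti}^{PX}, \C),
$$
yielding $\Ker(\delta_2) \cong H^1(\Mod_{\Betti}^{PX}, \C)$.

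The last step is to show $H^1(\Mod_{\Betti}^{PX}, \C) = 0$. In the untwisted cases $II, III(D6), IV, V, VI$ the non-abelian Hodge theory and the Riemann--Hilbert correspondence provide a diffeomorphism $\Mod_{\Betti}^{PX} \cong \Mod_{\Dol}^{PX}$, whereupon Lemma~\ref{lem:b_1} immediately gives $b_1 = 0$. The twisted cases $I, III(D7), III(D8), V_{\degen}$ are the main obstacle, since no such diffeomorphism is established in the literature. I would treat them uniformly by deforming the cubic equation~\eqref{eq:cubic} within the Zariski-open locus in $\C^{20}$ parameterizing smooth affine cubic surfaces: Betti numbers are locally constant along such a smooth family, and since all the $\Mod_{\Betti}^{PX}$ belong to this connected locus, the vanishing $b_1(\Mod_{\Betti}^{PVI}) = 0$ obtained in the previous step propagates to every other $X$.
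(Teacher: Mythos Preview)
Your reduction $\Ker(\delta_2) \cong H^1(\Mod_{\Betti}^{PX}, \C)$ via Lefschetz duality for the pair $(\widetilde{\Mod}_{\Betti}^{PX}, \Mod_{\Betti}^{PX})$ is correct and elegant, and in the untwisted cases the appeal to the diffeomorphism with $\Mod_{\Dol}^{PX}$ together with Lemma~\ref{lem:b_1} is legitimate, since that diffeomorphism is established independently of Proposition~\ref{prop:Betti}.

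The genuine gap is in the twisted cases. Your assertion that Betti numbers are locally constant along the family of smooth affine cubic surfaces is false: Ehresmann's theorem requires properness, and without it the fibers of a smooth family need not even be homotopy equivalent. Concretely, the affine cubic $\{x_1 x_2 x_3 = 1\} \subset \C^3$ is smooth and lies in your connected Zariski-open locus in $\C^{20}$, but it is isomorphic to $(\C^{\times})^2$ and hence has $b_1 = 2$. Since your untwisted step already gives $b_1(\Mod_{\Betti}^{PVI}) = 0$, this directly contradicts the claimed constancy of $b_1$ on the locus, so the deformation argument cannot close the cases $X \in \{I, III(D7), III(D8), V_{\degen}\}$.

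For comparison, the paper's proof works uniformly on the Betti side with no appeal to non-abelian Hodge theory. After making the same identification of $\delta_2$ with $1_L \mapsto PD([L])$, it proves linear independence of the classes $[L]$ by blowing up the base locus of a generic pencil of hyperplane sections of $\widetilde{\Mod}_{\Betti}^{PX}$ to obtain an elliptic surface $E \cong E(1)$ in which the proper transforms $\tilde{L}$ form the fiber over $\infty$. The intersection form on that fiber is negative semi-definite with one-dimensional radical generated by $\sum_L [\tilde{L}]$, and this class is nonzero because it pairs nontrivially with any exceptional divisor of the blow-up (a section of the fibration). Your route could be repaired by proving $b_1(\Mod_{\Betti}^{PX}) = 0$ directly via a Mayer--Vietoris argument on $\widetilde{\Mod}_{\Betti}^{PX} = \Mod_{\Betti}^{PX} \cup N(D^{PX})$ parallel to Lemma~\ref{lem:b_1}, but showing the connecting map is injective there requires essentially the same intersection-theoretic input.
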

\begin{proof}
Consider a tubular neighbourhood $N_L$ of $L$ in $\widetilde{\Mod}_{\Betti}^{PX}$, 
diffeomorphic to the normal bundle of $L$ in $\widetilde{\Mod}_{\Betti}^{PX}$. 
We have the Thom morphism
\begin{align*}
   \iota_{L!} : H^0 (L, \C ) & \to H^2 \left( N_L, \C \right) \\
   1 & \mapsto \Phi_L
\end{align*}
where 
$$
  \iota_L: L \to N_L
$$
is the inclusion map and $\Phi_L$ stands for the Thom class of $L$ in $N_L$. 
Let now 
$$
  j_L : N_L \to \widetilde{\Mod}_{\Betti}^{PX}
$$
be the inclusion map. 
Notice that as $\Phi_L$ is vertically of compact support, its class can be extended by $0$ to define a class 
\begin{equation*}
  j_{L!} \Phi_L \in H^2\left(\widetilde{\Mod}_{\Betti}^{PX}, \C \right). 
\end{equation*}
The restriction of $\delta_2$ to the component $H^0 (L, \C )$ then maps 
$$
 H^0 (L, \C ) \ni 1 \mapsto j_{L!} \Phi_L \in H^2\left(\widetilde{\Mod}_{\Betti}^{PX}, \C \right).
$$
Now, according to Proposition 6.24 \cite{BT}, we have 
$$
  j_{L!} \Phi_L = PD_{N_L} ( [L] ) = PD_{\widetilde{\Mod}_{\Betti}^{PX}} ( [L] ), 
$$
where $PD_V$ stands for Poincar\'e duality in $V$ and $[L]$ is the cohomology class defined by integration on $L$. Therefore, for any 
$$
  (n_L)_{L \in \mathcal{N}^{PX}_0} \in \bigoplus_{L \in \mathcal{N}^{PX}_0} H^0 (L, \C )
$$
we have 
$$
  \delta_2 ( (n_L)_{L \in \mathcal{N}^{PX}_0} ) = PD_{\widetilde{\Mod}_{\Betti}^{PX}} \left( \sum_{L \in \mathcal{N}^{PX}_0} n_L [L] \right). 
$$
As Poincar\'e duality is perfect, the assertion is equivalent to showing that the classes $[L]$ for $L\in \mathcal{N}^{PX}_0$ are linearly independent in $H^2\left(\widetilde{\Mod}_{\Betti}^{PX}, \C \right)$. 

For this purpose, we fix a generic line $\ell$ in the projective plane $x_0 = 0$, and let 
$$
  \CP2_t, \quad t\in \CP1
$$
denote the pencil of projective planes in $\CP3$ passing through $\ell$. 
We may assume that $t = \infty$ corresponds to the plane $x_0 = 0$. For each $t \in \CP1$, the curve 
\begin{equation}\label{eq:elliptic_pencil}
   E_t = \CP2_t \cap \overline{\Mod}_{\Betti}^{PX}
\end{equation}
is an elliptic curve. The line $\ell$ intersects for each $k \in \{ 1,2,3 \}$ the line $L_k$ in a single point $p_k$, 
which is (by genericity of $\ell$) different from all the intersection points $p_{ij}$. 
The elliptic pencil \eqref{eq:elliptic_pencil} has base locus $B = \{ p_1 , p_2 , p_3 \}$. Let us consider the quadratic transformation 
\begin{equation}\label{eq:omega}
   \omega: E \to \widetilde{\Mod}_{\Betti}^{PX}
\end{equation}
with center $B$; $E$ is then an elliptic surface over $\CP1$, in particular it is diffeomorphic to \eqref{eq:elliptic_surface}.  
The exceptional divisors $\omega^{-1}(p_k)$ are sections of $Y$, in particular they do not belong to the fiber $E_{\infty}$ over $\infty$. 
Let us denote by $\tilde{L}$ the proper transform of $L$ with respect to $\omega$. We may then write 
$$
  \omega^* [L] = [\tilde{L}] + \sum_k m_{L,k} [\omega^{-1}(p_k)]
$$
for some $m_{L,k} \in \{ 0, 1 \}$. The quotient 
$$
  H^2\left(E, \C \right) / H^2\left(\widetilde{\Mod}_{\Betti}^{PX}, \C \right)
$$
is spanned by the classes $[\omega^{-1}(p_k)], k \in \{ 1,2,3 \}$. 
From this we see that if the classes $[L], L\in \mathcal{N}^{PX}_0$ were linearly dependent in $H^2\left(\widetilde{\Mod}_{\Betti}^{PX}, \C \right)$ 
then so would be the classes $[\tilde{L}], L\in \mathcal{N}^{PX}_0$ in $H^2\left(E, \C \right)$. 
Now, the fiber of the elliptic fibration $E$ over $t = \infty$ is equal to 
$$
  E_{\infty} \cong \widetilde{\Mod}_{\Betti}^{PX} \setminus {\Mod}_{\Betti}^{PX} = D^{PX} =  \cup_{L \in \mathcal{N}^{PX}_0} \tilde{L}. 
$$
\blue{We have already determined the type of $E_{\infty}$ in~\eqref{eq:nerve},}
in particular its intersection form is negative semi-definite, with non-trivial radical. 
The only possible vanishing linear combination of these classes would then be one in the radical of \eqref{eq:nerve}, generated by $(1, \ldots , 1)$. 
However, one sees immediately that the intersection number of 
\begin{equation}\label{eq:radical}
    \sum_{L \in \mathcal{N}^{PX}_0} [\tilde{L}] 
\end{equation}
and any $[\omega^{-1}(p_k)]$ is equal to $1$,  hence \eqref{eq:radical} is a non-zero class. 
Alternatively, 
the hyperplane class $[H]$ must intersect the class~\eqref{eq:radical} positively because the orthogonal complement of $[H]$ in $H^2\left(E, \C \right)$ is negative definite while the 
lattice of \eqref{eq:nerve} has non-trivial radical.
\end{proof}

The spectral sequence degenerates at ${}_W E_2$ and the weight with respect to the filtration $W[k]$ defining the mixed Hodge structure 
on $H^* ({\Mod}_{\Betti}^{PX}, \C)$ is defined by 
$$
  {}_W E_2^{-n,k+n} \mapsto k + n. 
$$
Taking into account Lemma \ref{lem:delta4} we derive that the only non-vanishing graded pieces of the weight filtration on cohomology read as: 
\begin{align*}
 \Gr^W_0 H^0 (  \Mod_{\Betti}^{PX} , \C ) & \cong \C \\ 
 \Gr^W_{2} H^2 (\Mod_{\Betti}^{PX}, \C ) & = \Coker \left( \delta_2:  \oplus_{L \in \mathcal{N}^{PX}_0} H^0 (L, \C ) \to H^2 \left( \widetilde{\Mod}_{\Betti}^{PX}, \C \right) \right) \\
 \Gr^W_{4} H^2 (\Mod_{\Betti}^{PX}, \C ) & = \Ker \left( \delta: \oplus_{p \in \mathcal{N}^{PX}_1} H^0 (p, \C ) \to \oplus_{L \in \mathcal{N}^{PX}_0} H^2 (L, \C )  \right) \\ 
      & = H^1 (S^1, \C)  \cong \C.
\end{align*} 
In particular, we have 
\begin{equation}\label{eq:generatorW2}
  W^2 H^2 (\Mod_{\Betti}^{PX}, \C ) = \im ( i^* ) 
\end{equation}
where 
$$
  i^* : H^2 \left( \widetilde{\Mod}_{\Betti}^{PX}, \C \right) \to H^2 (\Mod_{\Betti}^{PX}, \C )
$$
is the morphism induced by inclusion 
$$
  i: \Mod_{\Betti}^{PX} \to \widetilde{\Mod}_{\Betti}^{PX}. 
$$
Recalling \eqref{eq:nerve} that $\mathcal{N}^{PX}$ is a cycle of length $N^{PX} + 3$, Lemmas \ref{lem:b_2} and \ref{lem:delta_2} finish the proof \blue{of Proposition~\ref{prop:Betti}.}
\end{proof}

It remains to compute $N^{PX}$ and compare the perverse and weight polynomials explicitly for each $X$. 
We will determine $N^{PX}$ using the explicit form of the quadratic terms $Q^{PX}$ provided in \cite{PS}. 
Before turning to the study of the various cases, let us address a result that will be needed in some of the cases. 
Namely, assume that $\overline{\Mod}_{\Betti}^{PX}$ has a singularity at $[0:0:0:1]$. Plugging $x_3 = 1$ into $F^{PX}$ we get 
$$
  F^{PX} (x_0, x_1, x_2, 1) = f_2 (x_0, x_1, x_2) + f_3 (x_0, x_1, x_2)
$$
with $f_i$ homogeneous of order $i$. If $f_2$ is a non-degenerate quadratic form, then the Hessian of $F^{PX}$ at the singular point is  
non-degenerate, and so the singularity is of type $A_1$. Up to exchanging $x_0$ and $x_2$, we have the following. 
\begin{lem}\cite[Lemma~3$(c)$]{BW}\label{lem:A_k}
 Assume that $\overline{\Mod}_{\Betti}^{PX}$ has a singularity at $[0:0:0:1]$ and that with the above notations we have $f_2 = x_1 x_2$. 
 Then, if $f_3(1,0,0) \neq 0$ then $\overline{\Mod}_{\Betti}^{PX}$ has a singularity of type $A_2$ at $[0:0:0:1]$. 
 If $(1,0,0)$ is a $k_i$-tuple intersection of $x_i = 0$ with $f_3 = 0$, $i = 1,2$, then $[0:0:0:1]$ is an $A_{k_1 + k_2 + 1}$ singularity for 
 $$
  \{ k_1 , k_2 \} =  \{ 1 , 1 \}, \quad \{ 1 , 2 \}, \quad \{ 1 , 3 \}.
 $$
\end{lem}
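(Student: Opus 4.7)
The plan is to bring $F^{PX}$ into a local normal form via the splitting lemma of singularity theory and then read off the $A$-type. Since $f_2 = x_1 x_2$ is a non-degenerate quadratic form in $(x_1, x_2)$, the splitting lemma supplies local analytic coordinates $(y_0, y_1, y_2)$ near the origin of the affine chart $x_3 = 1$ in which
\begin{equation*}
  F^{PX}(x_0, x_1, x_2, 1) = y_1 y_2 + g(y_0),
\end{equation*}
where $g$ is a convergent power series with $g(0) = 0$ and $y_0 = x_0 + O(|x|^2)$. After absorbing the leading coefficient of $g$ by a Tschirnhaus substitution, the germ becomes $y_1 y_2 + y_0^n$ with $n = \operatorname{ord}_0 g$, which is the standard $A_{n-1}$ normal form. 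Thus the entire problem reduces to computing $n$ from the cubic $f_3$.

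First I would compute $g$ explicitly as $g(x_0) = F(x_0, \phi_1(x_0), \phi_2(x_0))$, where $\phi_i$ are the unique analytic solutions of $\partial_{x_i} F = 0$ furnished by the implicit function theorem, with leading behaviour $\phi_2 \approx -\partial_{x_1} f_3(x_0, 0, 0)$ and symmetrically for $\phi_1$. When $f_3(1, 0, 0) \neq 0$, this immediately gives $g(x_0) = f_3(1, 0, 0)\, x_0^3 + O(x_0^4)$, so $n = 3$ and the singularity is $A_2$. Under the second hypothesis $f_3(1, 0, 0) = 0$, the $k_i$-tuple intersection conditions translate into expansions
\begin{equation*}
  f_3(x_0, 0, x_2) = c_1 x_0^{3-k_1} x_2^{k_1} + O(x_2^{k_1+1}), \qquad f_3(x_0, x_1, 0) = c_2 x_0^{3-k_2} x_1^{k_2} + O(x_1^{k_2+1}),
\end{equation*}
with $c_1, c_2 \neq 0$. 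I would then assign the quasi-homogeneous weights $\operatorname{wt}(x_0) = 1$, $\operatorname{wt}(x_1) = k_2 + 1$, $\operatorname{wt}(x_2) = k_1 + 1$, iterate the equations for $\phi_1, \phi_2$ to sufficient order, and verify that the weighted leading part of $g$ is a non-zero multiple of $x_0^{k_1 + k_2 + 2}$, yielding $n = k_1 + k_2 + 2$ and the singularity $A_{k_1 + k_2 + 1}$. A sanity check in the case $\{k_1, k_2\} = \{1,1\}$ confirms this: the two leading substitution terms and the quadratic product all contribute at order $x_0^4$ and sum to a non-zero multiple of $-c_1 c_2\, x_0^4$, so $n = 4$ and the singularity is $A_3$.

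The hard part will be controlling subleading cancellations to ensure that the order of $g$ is neither raised nor lowered by the omitted higher-order terms. The clean remedy is the finite determinacy theorem for simple hypersurface singularities: since $A_m$ is $(m + 1)$-determined, it suffices to match the weighted leading jet of $F$ with the standard $A_n$ normal form, which the weight assignment above makes manifest. For the three enumerated cases $\{k_1, k_2\} \in \{\{1,1\}, \{1,2\}, \{1,3\}\}$ a direct Taylor expansion of the splitting up to degree six is already enough, and since the statement is exactly \cite[Lemma~3(c)]{BW}, I would ultimately defer to that reference rather than redo the full bookkeeping.
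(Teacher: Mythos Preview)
The paper gives no proof of this lemma at all: it is simply quoted as \cite[Lemma~3(c)]{BW} and used as a black box in the subsequent case analysis. Your proposal ultimately does the same thing, deferring to the Bruce--Wall reference, but you additionally supply a correct sketch of the standard singularity-theoretic argument (splitting lemma to reduce to $y_1 y_2 + g(y_0)$, then identifying $\operatorname{ord}_0 g$ via the intersection multiplicities $k_1,k_2$). Your sanity check in the $\{1,1\}$ case, yielding the leading term $-c_1 c_2\, x_0^4$ and hence an $A_3$ singularity, is accurate. So there is no discrepancy to report: your approach is a faithful expansion of what Bruce--Wall actually do, and the paper itself does not attempt an independent argument.
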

The study of the specific cases, based on Lemma \ref{lem:A_k}, is contained in Subsections \ref{subsec:BVI}--\ref{subsec:BI} below. 
We note that in Subsections \ref{subsec:BVI}, \ref{subsec:BV}, \ref{subsec:BIV}, \ref{subsec:BIII(D6)} and \ref{subsec:BII} 
we rederive the weight polynomials obtained in Section 6 of \cite{HMW} using different methods.

\subsection{Case $X = VI$} \label{subsec:BVI}

In this case the quadric is of the form 
$$
  Q^{PVI} = x_1^2 + x_2^2 + x_3^2 - s_1 x_1 - s_2 x_2 - s_3 x_3 + s_4
$$
with $s_1, s_2, s_3, s_4 \in \C$. 
This is the generic quadric, so it is smooth at infinity, and 
$$
  WH^{PVI} (q,t) = 1 + 4 q t^2 + q^{2} t^2. 
$$

\subsection{Case $X = V$} \label{subsec:BV}

In this case the quadric is of the form 
$$
  Q^{PV} = x_1^2 + x_2^2 - (s_1 + s_2 s_3) x_1 - (s_2 + s_1 s_3) x_2 - s_3 x_3 + s_3^2 + s_1 s_2 s_3 +1 
$$
with $s_1, s_2 \in \C, s_3 \in \C^{\times}$. We have 
$$
  F^{PV} =  x_1 x_2 x_3 + x_0 x_1^2 + x_0 x_2^2 - (s_1 + s_2 s_3) x_0^2 x_1 - (s_2 + s_1 s_3) x_0^2 x_2 - s_3 x_0^2 x_3 + (s_3^2 + s_1 s_2 s_3 +1)  x_0^3. 
$$
An easy computation gives that the only singular point of $\overline{\Mod}_{\Betti}^{PV}$ over $x_0 = 0$ is $[0:0:0:1]$. 
We consider the affine chart $x_3 \neq 0$ and normalize $x_3 = 1$. 
Then, we have 
$$
  f_2 = x_1 x_2  - s_3 x_0^2, 
$$
which is a non-degenerate quadratic form because $s_3 \neq 0$. We infer that this singular point is of type $A_1$, in particular its Milnor number is $1$, 
hence 
$$
  WH^{PV} (q,t) = 1 + 3 q t^2 + q^{2} t^2. 
$$

\subsection{Case $X = V_{\degen}$} \label{subsec:BVdegen}

In this case the quadric is of the form 
$$
  Q^{PV_{\degen}} = x_1^2 + x_2^2 + s_0 x_1 + s_1 x_2 + 1
$$
with $s_0, s_1 \in \C$. 
The same analysis as in Subsection \ref{subsec:BV} shows that $[0:0:0:1]$ is the only singular point. 
This time, however, we have 
$$
 f_2 =  x_1 x_2, 
$$
which is degenerate. On the other hand, we have 
$$
  f_3 = x_0 x_1^2 + x_0 x_2^2 + s_0 x_0^2 x_1 + s_1 x_0^2 x_2 + x_0^3, 
$$
in particular $f_3(1,0,0) = 1$. Lemma \ref{lem:A_k} shows that the singularity is of type $A_2$, of $\mu = 2$, hence 
$$
    WH^{PV_{\degen}} (q,t) = 1 + 2 q t^2 + q^{2} t^2. 
$$

\subsection{Case $X = IV$} \label{subsec:BIV}

In this case the quadric is of the form 
$$
  Q^{PIV} = x_1^2 -(s_2^2 + s_1 s_2)x_1 - s_2^2 x_2 - s_2^2 x_3 + s_2^2 + s_1 s_2^3 
$$
with $s_1 \in \C, s_2 \in \C^{\times}$. 
We have 
$$
  F^{PIV} =  x_1 x_2 x_3 + x_0 x_1^2 -(s_2^2 + s_1 s_2) x_0^2 x_1 - s_2^2 x_0^2 x_2 - s_2^2 x_0^2 x_3 + (s_2^2 + s_1 s_2^3) x_0^3, 
$$
and the singular points of $\overline{\Mod}_{\Betti}^{PIV}$ over $x_0 = 0$ are $[0:0:1:0]$ and $[0:0:0:1]$. 
In the first point, the second-order homogeneous term of $F^{PIV}$ in affine co-ordinates $(x_0, x_1, x_3)$ is given by 
$$
  x_1 x_3 - s_2^2 x_0^2, 
$$
which is non-degenerate because $s_2 \neq 0$, so this singular point is of type $A_1$. 
In the second point, the second-order homogeneous term of $F^{PIV}$ in affine co-ordinates $(x_0, x_1, x_2)$ is given by 
$$
  f_2 = x_1 x_2 - s_2^2 x_0^2, 
$$
which shows that this singular point is again of type $A_1$. We infer that $\overline{\Mod}_{\Betti}^{PIV}$ has two singular points, 
each of Milnor number $1$, and 
$$
  WH^{PIV} (q,t) = 1 + 2 q t^2 + q^{2} t^2. 
$$

\subsection{Case $X = III(D6)$} \label{subsec:BIII(D6)}

In this case the quadric is of the form 
$$
  Q^{PIII(D6)} = x_1^2 + x_2^2 + (1 + \alpha \beta ) x_1 + (\alpha + \beta) x_2 + \alpha \beta
$$
with $\alpha, \beta \in \C^{\times}$.
The only singular point of $\overline{\Mod}_{\Betti}^{PIII(D6)}$ is $[0:0:0:1]$, with degree two term 
$$
  f_2 = x_1 x_2. 
$$
This time we have 
$$
  f_3 = x_0 x_1^2 + x_0 x_2^2 + (1 + \alpha \beta ) x_0^2 x_1 (\alpha + \beta) x_0^2 x_2 + \alpha \beta x_0^3. 
$$
Now, we again see that $f_3(1,0,0) = \alpha \beta \neq 0$, so Lemma \ref{lem:A_k} implies that we have an $A_2$-singularity, thus 
$$
    WH^{PIII(D6)} (q,t) = 1 + 2 q t^2 + q^{2} t^2. 
$$

\subsection{Case $X = III(D7)$} \label{subsec:BIII(D7)}
This case is obtained from degeneration of Subsection \ref{subsec:BIII(D6)} by setting the parameter $\beta$ of $Q^{PIII(D6)}$ 
(corresponding to the eigenvalue of the formal monodromy at one of the the irregular singular points) equal to $0$. 
In this case (up to exchanging the variables $x_1, x_2$) the quadric is of the form 
$$
  Q^{PIII(D7)} = x_1^2 + x_2^2 + \alpha x_1 + x_2
$$
with $\alpha \in \C^{\times}$. 
The only singular point of $\overline{\Mod}_{\Betti}^{PIII(D7)}$ is $[0:0:0:1]$, with homogeneous terms 
$$
  f_2 = x_1 x_2, \quad f_3 = x_0 x_1^2 + x_0 x_2^2 + \alpha x_0^2 x_1 + x_0^2 x_2. 
$$
This time $f_2$ is degenerate and we have $f_3 (1,0,0) = 0$, so the singularity is neither of type $A_1$ nor of type $A_2$. 
Plugging $x_1 = 0$ in $f_3$ gives 
$$
  f_3 (x_0, 0, x_2) = x_0 x_2^2 + x_0^2 x_2. 
$$
As this form has non-trivial linear term in $x_2$ at $x_0 = 1$, we get that $k_1 = 1$. 
Similarly, from 
$$
  f_3 (x_0, x_1, 0) = x_0 x_1^2 + \alpha x_0^2 x_1 
$$
and $\alpha \neq 0$ we deduce $k_2 = 1$. According to Lemma \ref{lem:A_k} the singular point is of type $A_3$, of $\mu = 3$, and we obtain 
$$
    WH^{PIII(D7)} (q,t) = 1 + q t^2 + q^{2} t^2. 
$$

\subsection{Case $X = III(D8)$} \label{subsec:BIII(D8)}

This case is obtained from further degeneration of Subsection \ref{subsec:BIII(D7)} by setting the parameter $\alpha$ of $Q^{PIII(D7)}$ 
(corresponding to the eigenvalue of the formal monodromy at the only remaining unramified irregular singularity) equal to $0$ too. 
We find the quadric\footnote{Notice that this differs from the result $x_1 x_2 x_3 + x_1^2 - x_2^2 - 1$ obtained in \cite[3.6]{PS}. 
We are grateful to Masa-Hiko Saito for pointing out that in this case the monodromy data has the extra symmetry $x_i \mapsto - x_i$ for $i\in \{ 1,2 \}$. 
Indeed, the two-fold Weyl group $\mathfrak{S}_2 \times \mathfrak{S}_2$ acts on the monodromy data by passing to opposite Borel subgroups at the 
two irregular singular points, and only the diagonal $\mathfrak{S}_2$ leaves invariant the constraints on the parameters. 
Now, introducing the invariant co-ordinates $y_1 = x_1^2, y_2 = x_2^2, y_3 = x_1 x_2$ and eliminating $y_2$ we are led to the 
formula $y_1 y_3 x_3 + y_1^2 -y_3^2 - y_1$, which in turn transforms into \eqref{eq:quadric_D8} after some obvious changes of co-ordinates.} 
\begin{equation}\label{eq:quadric_D8}
  Q^{PIII(D8)} = x_1^2 + x_2^2 + x_2. 
\end{equation}
The only singular point of $\overline{\Mod}_{\Betti}^{PIII(D8)}$ is $[0:0:0:1]$, with homogeneous terms 
$$
  f_2 = x_1 x_2, \quad f_3 = x_0 x_1^2 + x_0 x_2^2 + x_0^2 x_2. 
$$
Just as in Subsection \ref{subsec:BIII(D7)}, the term $f_2$ is degenerate and $f_3 (1,0,0) = 0$. 
Moreover, as in Subsection \ref{subsec:BIII(D7)}, the intersection of $f_3 = 0$ with $x_1 = 0$ is of multiplicity $k_1 = 1$. 
However, plugging $x_2 = 0$ in $f_3$ yields 
$$
    f_3 (x_0, x_1, 0) = x_0 x_1^2,
$$
which is of multiplicity $k_2 = 2$ near $x_0 = 1, x_1 = 0$. 
Lemma \ref{lem:A_k} shows that this point is of type $A_4$ and we deduce 
$$
    WH^{PIII(D8)} (q,t) = 1 + q^{2} t^2. 
$$

\subsection{Case $X = II$} \label{subsec:BII}

In this case the quadric is of the form 
$$
  Q^{PII} = - x_1 - \alpha  x_2 -x_3 + \alpha + 1 
$$
with $\alpha \in \C^{\times}$. 
We have 
$$
  F^{PII} =  x_1 x_2 x_3 - x_0^2 x_1 - \alpha x_0^2 x_2 - x_0^2 x_3 + (\alpha + 1) x_0^3,  
$$
and the singular points of $\overline{\Mod}_{\Betti}^{PII}$ over $x_0 = 0$ are $[0:1:0:0]$, $[0:0:1:0]$ and $[0:0:0:1]$. 
As in the corresponding affine co-ordinates the degree two terms are respectively given by 
$$
  x_2 x_3 - x_0^2, \quad x_1 x_3 - \alpha x_0^2, \quad x_1 x_2 - x_0^2, 
$$
and $\alpha \neq 0$, we see that all these points are of type $A_1$. As a conclusion, we get 
$$
  WH^{PII} (q,t) = 1 + q t^2 + q^{2} t^2. 
$$



\subsection{Case $X = I$} \label{subsec:BI}

In this case the quadric is of the form 
$$
  Q^{PI} = x_1 + x_2 + 1. 
$$
There are three singular points of $\overline{\Mod}_{\Betti}^{PI}$ over $x_0 = 0$: $[0:1:0:0]$, $[0:0:1:0]$ and $[0:0:0:1]$. 
At the first two of these, the degree two terms in the corresponding affine co-ordinates respectively read as 
$$
  x_2 x_3 + x_0^2, \quad x_1 x_3 + x_0^2, 
$$
so these singularities are of type $A_1$. At $[0:0:0:1]$ however, we have 
$$
 f_2 = x_1 x_2, \quad f_3 = x_0^2 x_1 + x_0^2 x_2 + x_0^3. 
$$
As $f_3 (1,0,0) = 1 \neq 0$, by virtue of Lemma \ref{lem:A_k} this singularity is of type $A_2$. In total we have three singular points, with Milnor numbers $1,1,2$ respectively, therefore 
$$
  WH^{PI} (q,t) = 1 + q^{2} t^2. 
$$

\section{Proof of Theorem \ref{thm:Simpson}}\label{sec:proof}

We start by reducing the statement to a special case, namely
the nilpotent Painlev\'e VI case, i.e. the case where the parabolic divisor 
consists of four distinct points where the Higgs field has first order poles, 
and in addition the residue of the Higgs field at each such point is nilpotent. 

\begin{prop}
 Assume that Theorem \ref{thm:Simpson} holds in the nilpotent Painlev\'e VI case. 
 Then it also holds in all cases $X$, with arbitrary choice of parameter values. 
\end{prop}

\begin{proof}
We first discuss continuous families of irregular Dolbeault spaces. 
It follows from Lemma~\ref{lem:embedding} 
and~\cite{ISS1},~\cite{ISS2},~\cite{ISS3} that for each $X$ and 
each value of the parameters of the given family, the space $\Mod_{\Dol}^{PX}$ 
is diffeomorphic to the complement of the singular fiber at infinity 
$F_{\infty}^{PX}$ in a certain elliptic fibration. 
Specifically, consider the Hirzebruch surface 
$$
    p\colon\operatorname{Tot} (K_{\CP1} (D)) \to \CP1 
$$
and its fiberwise compactification 
$$
    \operatorname{P} (\O_{\CP1} + K_{\CP1} (D))
$$
by a  section at infinity $S_{\infty}$. 
Then, there exists a birational morphism 
$$
    \varpi\colon \CP2 \# 9 \overline{\CP{}}^2 \to \operatorname{P} 
    (\O_{\CP1} + K_{\CP1} (D))
$$
resolving the base locus of a certain elliptic pencil depending on $X$ and 
the parameter values. The fiber at infinity is then given by 
$$
    F_{\infty}^{PX} = \varpi^{-1} (S_{\infty} \cup p^{-1} (D ))
$$ 
where $p^{-1} (D )$ is the scheme-theoretic fiber over $D$.

Geometrically, the choice of parameters for $\Mod_{\Dol}^{PX}$ is thus 
equivalent to that of the base locus of the associated elliptic pencil, 
and may be conveniently described 
by a point in a certain stratum $\mathcal{S}^{PX}$ of the Hilbert scheme 
\begin{equation}\label{eq:Hilb}
     \operatorname{Hilb}^8 ( \operatorname{Tot} (K_{\CP1} (D)))
\end{equation}
of $8$ points on the twisted cotangent surface of the base curve.
The parameters of the family affect $F_{\infty}^{PX}$ continuously: 
to a continuous family of parameter values there corresponds a continuous 
family of base points in $\mathcal{S}^{PX}$. 
Fixing any $X$ and arbitrarily parameter values for the system $PX$, 
it follows from connectedness of the Hilbert scheme of a connected surface that 
there exists a path 
\begin{equation}\label{eq:path}
     f\colon [0,1] \to \operatorname{Hilb}^8 ( \operatorname{Tot} (K_{\CP1} (D)))
\end{equation}
starting at the parameter values corresponding to 
the nilpotent Painlev\'e VI case and ending at the given parameter values of 
the given system $X$. 

A crucial observation is that as we have seen in Lemma~\ref{lem:b_1}, 
for any $X$ the group $H_1 ( \Mod_{\Dol}^{PX} \cap N , \Q )$ is generated 
by the normal loop $[\gamma ]$ around $S_{\infty}$. 
Said differently, we may represent a generator of the first homology 
by a loop that depends neither on $X$ nor on the values of the parameters. 

Now, we turn our attention to the Betti side: here, the group 
$H_1 ( \Mod_{\Betti}^{PX} \cap N , \Q )$ is known to be generated by a 
longitudinal loop around the cycle~\eqref{eq:nerve}, i.e. a lift of 
this cycle to the boundary of the $4$-manifold $\Mod_{\Betti}^{PX} \cap N$ 
under the map $\phi$. 
The cycle~\eqref{eq:nerve} consists of a suitable blow-up of the 
divisor~\eqref{eq:D}. 
The blow-up is only needed to turn the compactifcation of $\Mod_{\Betti}^{PX}$ 
smooth, and has no effect on the first homology group. 
Again, we see that a generator of the first homology can be given by lifting 
under $\phi$ a loop independent of $X$ and the parameter values. 
In different words, the image under $\phi$ of a generator of 
$H_1 ( \Mod_{\Betti}^{PX} \cap N , \Q )$ is independent of $X$ and the 
parameter values. 

Recall from Subsections~\ref{subsec:BVI}--\ref{subsec:BI} that the spaces 
$\Mod_{\Betti}^{PX}$ are given as certain cubic surfaces. 
To the path~\eqref{eq:path} there corresponds a continuous path of 
coefficients of the quadrics $Q^{PX}$ appearing in~\eqref{eq:cubic}. 
Now, the non-abelian Hodge and Riemann--Hilbert correspondences depend 
continuously on the parameters of the families. 
Thus, the correspondences induce homotopic maps from the generator 
of $H_1 ( \Mod_{\Dol}^{PX} \cap N , \Q )$ to that of 
$H_1 ( \Mod_{\Betti}^{PX} \cap N , \Q )$. 
Invariance of the degree for homotopies of continuous maps from $S^1$ to $S^1$ 
then shows that if $\phi \circ \psi$ induces an isomorphism on $H_1$ for the
nilpotent Painlev\'e VI case then the same holds for arbitrarily parameter 
values of any system $PX$. 
\end{proof}

There only remains to show the statement in the nilpotent Painlev\'e VI case. 
From now on we thus set $X = VI$ and we take the residue orbits to be nilpotent
with full flag parabolic structure and generic parabolic weights. 
For sake of simplicity we drop the superscript $PVI$ from the notation. 
Let $z$ and $w = z^{-1}$ be the standard charts on $\CP1$, and let $\E$ denote a holomorphic vector bundle of degree $0$ over $\CP1$. 
We consider parabolically stable logarithmic Higgs fields $\theta$ on $\E$ with singularities at $0,1,t,\infty$ having nilpotent residue at all these points. 
We have 
$$
  \mbox{tr} (\theta ) \in H^0 (\CP1, K  ) = 0, 
$$
and 
$$
  \det (\theta ) \in H^0 (\CP1, K^2  (0 + 1 + t + \infty ) ) \cong \C.
$$
The latter affine space is the Hitchin base appearing in~\eqref{eq:Hitchin_map}. We fix an isomorphism 
$$
  \O  \cong  K^2  (0 + 1 + t + \infty )
$$
given by 
$$
  1 \mapsto \frac{(\mbox{d} z)^{\otimes 2}}{z(z-1)(z-t)}. 
$$
On the other hand, we consider the holomorphic line bundle $L = K (0 + 1 + t + \infty )$ with the natural projection 
$$
  p_L : \mbox{Tot} ( L ) \to \CP1 
$$
of its total space $\mbox{Tot} ( L )$ to $\CP1$ and denote by 
$$
  \zeta \frac{\mbox{d} z}{z(z-1)(z-t)}
$$ 
the canonical section of $p_L^* L$ over $p_L^{-1} (\C)$. Then the curve 
$$
   \tilde{X}  = \{ (z, \zeta): \quad \zeta^2 + z(z-1)(z-t) = 0 \} \subset \C_z \times \C_{\zeta} 
$$
has a smooth compactification in $\mbox{Tot} ( L )$ at $z = \infty$ by the point 
$$
  w = 0, \zeta = 0. 
$$
We continue to denote this compactification by $\tilde{X}$ and moreover denote by 
\begin{align}\label{eq:double_cover}
   p: \tilde{X} & \to \CP1 \\
  (z, \zeta) & \mapsto z \notag
\end{align}
the restriction of $p_L$, a ramified double covering map with branch points $\{ 0, 1, t, \infty \}$. 
Topologically, $\tilde{X}$ is diffeomorphic to a smooth $2$-torus. 
Let us introduce the bivalued holomorphic $1$-form on $\C \setminus \{ 0,1,t \}$ 
$$
  \omega = \frac{\mbox{d} z}{\sqrt{z(z-1)(z-t)}}
$$
so that 
$$
  p^* \omega = \frac{\mbox{d} z}{\zeta}.
$$ 
Now, for instance near $(z,\zeta ) = (0,0)$ we have $z = \zeta^2 h(\zeta )$ for some holomorphic function $h$ with $h(0)\neq 0$, so we have in a neighbourhood of this point 
$$
  \frac{\mbox{d} z}{\zeta} = \frac{2 \zeta h(\zeta ) \mbox{d} \zeta +  \zeta^2 \mbox{d} h}{\zeta} = 2 h(\zeta ) \mbox{d} \zeta +  \zeta \mbox{d} h. 
$$
A similar argument near the other three ramification points show that $p^* \omega$ is a univalued holomorphic $1$-form on $\tilde{X}$, i.e. a generator of $H^0(\tilde{X}, K_{\tilde{X}})$. 

For $R>>0, \varphi \in \mathbb{R} / 2 \pi \Z$ we let $(\E_{R, \varphi} ,\theta_{R, \varphi})$ be any rank $2$ logarithmic Higgs bundle over $\CP1$ with 
$$
  \det (\theta_{R, \varphi} ) = - R e^{\sqrt{-1} \varphi} \in H^0 (\CP1, K^2  (0 + 1 + t + \infty ) )
$$
(the sign is introduced for convenience). 
We will fix $R$ and let $\varphi$ vary, and we assume that $\theta_{R, \varphi}$ depends smoothly and 
$2\pi$-periodically on $\varphi$, thus providing a smooth section of~\eqref{eq:Hitchin_map} over $|z| = R$; 
such lifts clearly exist. 
The \emph{spectral curve} of $(\E_{R, \varphi} ,\theta_{R, \varphi})$ defined as 
$$
  \tilde{X}_{R, \varphi}  = \left\{ (z, \zeta): \quad \det \left( \theta_{R, \varphi} - \zeta \frac{\mbox{d} z}{z(z-1)(z-t)} \right) = 0 \right\} \subset \mbox{Tot} ( L )
$$
is obtained by rescaling~\eqref{eq:double_cover} in the $\zeta$-direction by the factor $\sqrt{R} e^{\sqrt{-1} \varphi / 2}$. 
In particular, for any $(R, \varphi)$ the branch points of $\tilde{X}_{R, \varphi}$ are the distinct points $\{ 0, 1, t, \infty \}$, and 
the curve $\tilde{X}_{R, \varphi}$ is smooth. 
Let $z_0 \notin \{ 0,1,t, \infty \}$ and fix $\varepsilon_0 > 0$ such that $B_{2\varepsilon_0 } (z_0)$ is disjoint from $\{ 0,1,t, \infty \}$. 
According to \cite[Theorem~1.4]{Moc}, for $z\in B_{\varepsilon_0 } (z_0)$ there exists a smoothly varying frame $e_1 (z), e_2 (z)$ of $\E$ with respect to which we have 
the asymptotic equality 
$$
  \theta_{R, \varphi} (z) - \begin{pmatrix}
                             \sqrt{R} e^{\sqrt{-1} \varphi / 2} & 0 \\
                             0 & - \sqrt{R} e^{\sqrt{-1} \varphi / 2}
                            \end{pmatrix}
	\omega \to 0 
$$
as $R\to\infty$, with exponential rate. 
Because $\omega$ is bivalued, the vectors $e_1 (z), e_2 (z)$ get interchanged as the position of the point $z_0$ moves along a simple loop $\gamma$ around one of the punctures. 
In different terms the monodromy transformation of the trivialization along any such $\gamma$ is the transposition matrix 
$$
  T = \begin{pmatrix}
   0 & 1 \\
   1 & 0
  \end{pmatrix}. 
$$

We consider the connection associated by non-abelian Hodge theory to $(\E_{R, \varphi} ,\theta_{R, \varphi})$ with respect to the gauge $e_1 (z), e_2 (z)$. 
Its connection form is 
\begin{align*}
   a_{R, \varphi} (z, \bar{z}) & = \theta_{R, \varphi} (z) + \overline{\theta_{R, \varphi} (z)} + b_{R, \varphi} \\
   & \approx \sqrt{R}  
   \begin{pmatrix}
    e^{\sqrt{-1} \varphi / 2} \omega + e^{-\sqrt{-1} \varphi / 2} \bar{\omega} & 0 \\
    0 & - e^{\sqrt{-1} \varphi / 2} \omega - e^{-\sqrt{-1} \varphi / 2} \bar{\omega}
   \end{pmatrix} 
   + b_{R, \varphi} 
\end{align*}
where $\approx$ means that the difference of the two sides converges exponentially to $0$ as $R\to\infty$, 
and $b_{R, \varphi}$ is the connection form of the Chern-connection $\partial_{R, \varphi}$ of $(\E_{R, \varphi}, h_{R, \varphi} )$. 
It also follows from \cite[Theorem~1.4]{Moc} that 
\begin{equation}\label{eq:asymptotic_commuting}
   \left[ \begin{pmatrix}
    e^{\sqrt{-1} \varphi / 2} \omega + e^{-\sqrt{-1} \varphi / 2} \bar{\omega} & 0 \\
    0 & - e^{\sqrt{-1} \varphi / 2} \omega - e^{-\sqrt{-1} \varphi / 2} \bar{\omega}
   \end{pmatrix} ,
   b_{R, \varphi} \right] \to 0 
\end{equation}
exponentially as $R\to\infty$. 
Furthermore, as $\mbox{tr} (\theta ) \equiv 0$ the Higgs field induced by $\theta_{R, \varphi}$ on $\det (\E_{R, \varphi} )$ is identically zero. 
It follows that the corresponding Hermitian--Einstein metric $h_{\det (\E )} \equiv 1$  and thus $b_{R, \varphi}$ takes values in $\mathfrak{su}(2, \C)$. 
As we will only be interested in the absolute value of the integral of $a_{R, \varphi}$ along loops and the monodromy of the Chern connection is unitary, 
we will see that the actual shape of $b_{R, \varphi}$ is irrelevant for our purposes. 

In order to get a hold on $\psi (\E_{R, \varphi} ,\theta_{R, \varphi})$, we need to apply the Riemann--Hilbert correspondence to the connection obtained in the previous paragraph. 
For this purpose, we now fix $z_0$ and simple loops $\gamma_0, \gamma_1, \gamma_t$ based at $z_0$ winding about the punctures $\{ 0,1,t \}$ respectively, in positive direction. 
The monodromy matrices of the connection $\mbox{d} + a_{R, \varphi}$ associated to the punctures are given by 
\begin{equation*}
  B_j(R, \varphi) = \exp \oint_{\gamma_j} - a_{R, \varphi} (z, \bar{z})
\end{equation*}
where $j\in \{ 0,1,t \}$. Let us introduce the \emph{half-period integrals} 
$$
  \pi_j  = \oint_{\gamma_j} \omega. 
$$
By this we mean that we fix any one of the two lifts $\tilde{\gamma}_j$ of $\gamma_j$ by~\eqref{eq:double_cover} and set 
$$
  \pi_j = \int_{\tilde{\gamma}_j} p^* \omega. 
$$
By Baker--Campbell--Hausdorff formula and~\eqref{eq:asymptotic_commuting}, as $R\to\infty$ the monodromy matrix $B_j (R, \varphi)$ is asymptotically equal to 
\begin{equation*}
  T A_j(R, \varphi) \exp \sqrt{R} 
  \begin{pmatrix}
   -  e^{\sqrt{-1} \varphi / 2} \pi_j - e^{-\sqrt{-1} \varphi / 2}  \overline{\pi_j} & 0 \\
   0 & e^{\sqrt{-1} \varphi / 2} \pi_j + e^{-\sqrt{-1} \varphi / 2}  \overline{\pi_j}
  \end{pmatrix} 
\end{equation*}
where 
\begin{equation}\label{eq:unitary_factor}
  A_j(R, \varphi) \in \mbox{SU} (2 )
\end{equation}
stands for the monodromy of the Chern connection, commuting with the matrix on its right. These properties show that necessarily 
$$
   A_j(R, \varphi) = \begin{pmatrix}
                      e^{\sqrt{-1}\mu_j} & 0 \\
                      0 & e^{-\sqrt{-1}\mu_j}
                     \end{pmatrix}
$$
for some $\mu_j = \mu_j (R, \varphi) \in \mathbb{R}$. 
We then get 
\begin{equation*}
 B_j (R, \varphi) \approx 
 \begin{pmatrix}
       0 & \exp \left( - \sqrt{-1}\mu_j + 2\sqrt{R}  \Re ( e^{\sqrt{-1} \varphi / 2} \pi_j) \right) \\
       \exp \left( \sqrt{-1}\mu_j - 2\sqrt{R}  \Re ( e^{\sqrt{-1} \varphi / 2} \pi_j) \right) & 0
      \end{pmatrix} , 
\end{equation*}
and it follows that 
$$
  B_0 (R, \varphi) B_1 (R, \varphi) \approx 
  \begin{pmatrix}
   d_{01} (R, \varphi) & 0 \\
   0 & \frac 1{d_{01} (R, \varphi)} 
  \end{pmatrix}. 
$$
with 
$$
  d_{01} (R, \varphi) = \exp \left( \sqrt{-1} (\mu_1 - \mu_0) + 2\sqrt{R}  \Re ( e^{\sqrt{-1} \varphi / 2} (\pi_0 - \pi_1)) \right).
$$
Therefore, setting 
$$
  x_1 (R, \varphi) = \mbox{tr} (B_0 (R, \varphi) B_1 (R, \varphi) ) 
$$
we find 
\begin{align}
  x_1 (R, \varphi) & \approx 2 \cosh d_{01} (R, \varphi) \notag \\
   & = 2 \cosh \left( \sqrt{-1} (\mu_1 - \mu_0) + 2\sqrt{R}  \Re ( e^{\sqrt{-1} \varphi / 2} (\pi_0 - \pi_1)) \right) .\label{eq:X1}
\end{align}
Similarly, we find 
\begin{align}
 x_2 (R, \varphi) & = \mbox{tr} (B_t (R, \varphi) B_0 (R, \varphi) ) \notag \\ 
  & \approx 2 \cosh \left( \sqrt{-1} (\mu_0 - \mu_t) + 2\sqrt{R} \Re (e^{\sqrt{-1} \varphi / 2} (\pi_t - \pi_0 )) \right) \label{eq:X2} \\
 x_3 (R, \varphi) & = \mbox{tr} (B_1 (R, \varphi) B_t (R, \varphi) ) \notag \\ 
 & \approx 2 \cosh \left( \sqrt{-1} (\mu_t - \mu_1) + 2\sqrt{R} \Re ( e^{\sqrt{-1} \varphi / 2}(\pi_1 - \pi_t)) \right) \label{eq:X3}
\end{align}
where $x_2 (R, \varphi)$ and $x_3 (R, \varphi)$ are defined by the equalities in these formulas. 
It is known from \cite{FK} that these quantities fulfill the equation 
$$
  x_1 x_2 x_3 + x_1^2 + x_2^2 + x_3^2 - s_1 x_1 - s_2 x_2 - s_3 x_3 + s_4 = 0
$$
for some constants $s_1, s_2, s_3, s_4 \in \C$. By generic choice of the parabolic weights, the constants $s_m$ are generic, and the cubic 
surface determined by the above polynomial is smooth. 
As we have seen in Subsection~\ref{subsec:BVI}, in this case the compactifcation $\overline{\mathcal{M}}$
introduced in~\eqref{eq:compactifcation_Betti} agrees with the smooth compactification $\widetilde{\mathcal{M}}$ introduced in Proposition~\ref{prop:Betti}. 
The compactifying divisor is a union of three lines~\eqref{eq:D} in general position. 
The statement that the nerve complex of the boundary is homotopic to $S^1$ immediately follows. 
Let $\mathcal{N}$ stand for the dual simplicial complex of $D$. The vertices of $\mathcal{N}$ are given by 
\begin{equation}\label{eq:lines}
    v_1 = [0 : 0: x_2 : x_3], \quad v_2 = [0 : x_1 : 0 : x_3], \quad v_3 = [0:x_1 : x_2 :0],
\end{equation}
where for instance $[0:x_1 : x_2 :0]$ means the corresponding line in 
$$
  \CP2_{\infty} = \{ [0: x_1 : x_2 : x_3] \} \subset \CP3. 
$$ 
Let the edges of $\mathcal{N}$ be denoted by 
\begin{equation}\label{eq:intersection_points}
  [v_1 v_2], \quad [v_2 v_3], \quad [v_3 v_1],  
\end{equation}
respectively corresponding to the following intersection points of the divisor components: 
$$
  [0:0:0:1], \quad [0:1:0:0], \quad   [0:0:1:0]  . 
$$

 Consider open tubular neighbourhoods 
 $$
  T_1, T_2, T_3
 $$
 of the components~\eqref{eq:lines} such that 
 \begin{equation}\label{eq:triple-intersections}
   T_1\cap T_2 \cap T_3 = \emptyset.
 \end{equation}
 Set 
 \begin{equation}\label{eq:cover}
     T_{\Betti} = T_1 \cup T_2 \cup T_3, 
 \end{equation}
 so that $T_{\Betti}$ is an open tubular neighbourhood of $D$ in $\widetilde{\Mod}_{\Betti}$; $T_{\Betti}$ is a plumbed $4$-manifold. 
 Let 
 $$
 \phi_1, \phi_2, \phi_3
 $$
 be a partition of unity subordinate to the cover~\eqref{eq:cover}, and consider the map 
 \begin{align*}
    \phi : T_{\Betti} & \to \mathbb{R}^3 \\
    x & \mapsto \begin{pmatrix}
                 \phi_1 (x) \\ \phi_2 (x) \\ \phi_3 (x)
                \end{pmatrix}.
 \end{align*}
 The image of $\phi$ is contained in the standard simplex $\Delta^{2}$ of dimension $2$ because the family $\phi_j$ forms a partition of unity. 
 Moreover, it follows from~\eqref{eq:triple-intersections} that 
 $$
  \im(\phi ) \subset \left( \Delta^{2} \right)^1, 
 $$
 the $1$-skeleton of $\Delta^{2}$. 
 A closer look shows that we have the equality: 
 \begin{equation}\label{eq:ImPhi}
    \im(\phi ) = \left( \Delta^{2} \right)^1 = [v_1 v_2] \cup [v_2 v_3] \cup [v_3 v_1], 
 \end{equation}
 which is homotopy equivalent to $S^1$.

We need to show that for any section $(\E_{R, \varphi} ,\theta_{R, \varphi})$ of $h$ over $|z| = R$, the loop 
$$
  \phi \circ \psi (\E_{R, \varphi} ,\theta_{R, \varphi})
$$
is a generator of $\pi_1 (|\mathcal{N} |)$. 
For this purpose, we need to study the asymptotic behaviour of the quotients 
$$
  \frac{x_1(R, \varphi)}{x_2(R, \varphi)}, \quad \frac{x_3(R, \varphi)}{x_1(R, \varphi)}, \quad \frac{x_2(R, \varphi)}{x_3(R, \varphi)}
$$
as $R\to +\infty$, and in particular the way this behaviour depends on $\varphi \in [0, 2\pi ]$. 
For this purpose observe first that for $d\in \C$ with  
$|\Re (d)| >> 0$ we have 
$$
  |2 \cosh (d ) | \approx e^{| d |}.
$$
Applying this asymptotic equivalence to~\eqref{eq:X1},~\eqref{eq:X2},~\eqref{eq:X3} for $R>>0$ we find 
\begin{align*}
 |x_1(R, \varphi)| & \approx  \exp \left( 2\sqrt{R} |\Re ( e^{\sqrt{-1} \varphi / 2} ( \pi_0 - \pi_1))|\right), \\
 |x_2(R, \varphi)| & \approx  \exp \left( 2\sqrt{R} |\Re ( e^{\sqrt{-1} \varphi / 2} ( \pi_t - \pi_0))|\right), \\
 |x_3(R, \varphi)| & \approx  \exp \left( 2\sqrt{R} |\Re ( e^{\sqrt{-1} \varphi / 2} ( \pi_1 - \pi_t))|\right).  
\end{align*}
For generic $t\in \CP1 \setminus \{ 0, 1 , \infty \}$ the periods $\pi_0, \pi_1, \pi_t$ are not colinear in $\C$, 
said differently they form the vertices of a non-degenerate triangle $\Delta$ with sides 
$$
  a = \pi_0 - \pi_1, \quad   b = \pi_t - \pi_0, \quad c = \pi_1 - \pi_t.
$$
Consider the triangles $e^{\sqrt{-1} \varphi / 2} \Delta$ as $\varphi$ ranges over $[0, 2\pi )$. 
A straightforward geometric inspection shows that the lengths of the projection onto the real axis 
of the three sides of $e^{\sqrt{-1} \varphi / 2} \Delta$ obey the following rule. 
\begin{lem}\label{lem:triangle}
Let $\Delta \subset \C$ be any non-degenerate triangle with sides $a,b,c \in \C$ such that $a + b + c = 0$. 
Let us denote by $e^{\sqrt{-1} \varphi / 2} \Delta$ the triangle obtained by rotating $\Delta$ by angle $\varphi / 2$ in the positive direction, with sides 
$e^{\sqrt{-1} \varphi / 2} a, e^{\sqrt{-1} \varphi / 2} b, e^{\sqrt{-1} \varphi / 2}c$. 
Then, for each side $a,b,c$ there exists exactly one value $\varphi_a, \varphi_b, \varphi_c \in [0, 2\pi )$ such that $e^{\sqrt{-1} \varphi_a / 2} a$ 
(respectively $e^{\sqrt{-1} \varphi_b / 2}b, e^{\sqrt{-1} \varphi_c / 2}c$) is purely imaginary. 
We have 
$$
  \Re ( e^{\sqrt{-1} \varphi_a / 2} b ) = \Re ( e^{\sqrt{-1} \varphi_a / 2} c ). 
$$
In addition, the function 
$$
  \Re ( e^{\sqrt{-1} \varphi / 2} b ) - \Re ( e^{\sqrt{-1} \varphi / 2} c )
$$
changes sign at $\varphi = \varphi_a$. Similar statements hold with $a,b,c$ permuted. 
\end{lem}
We call $\varphi_a, \varphi_b, \varphi_c$ the \emph{critical angle} of the sides $a,b,c$ respectively. 
By genericity, the critical angles $\varphi_a, \varphi_b, \varphi_c$ are pairwise different. 
It follows from the lemma that the critical angles decompose $S^1$ into three closed arcs 
$$
  S^1 = I_1 \cup I_2 \cup I_3 
$$
pairwise intersecting each other in a critical angle, satisfying the property: 
$$
  \max (|\Re (e^{\sqrt{-1} \varphi / 2} (\pi_0 - \pi_1 ))|, |\Re (e^{\sqrt{-1} \varphi / 2} (\pi_t - \pi_0 ))|, |\Re ( e^{\sqrt{-1} \varphi / 2}(\pi_1 - \pi_t))| )
$$
is realized
\begin{itemize}
 \item by $|\Re (e^{\sqrt{-1} \varphi / 2} (\pi_0 - \pi_1 ))|$ for $\varphi \in I_1$, 
 \item by $|\Re (e^{\sqrt{-1} \varphi / 2} (\pi_t - \pi_0 ))|$ for $\varphi \in I_2$, 
\item and by $|\Re (e^{\sqrt{-1} \varphi / 2} (\pi_1 - \pi_t ))|$ for $\varphi \in I_3$.
\end{itemize}
Namely, $I_1$ is the arc with end-points $\varphi_b, \varphi_c$ not containing $\varphi_a$, and so on. 
Let us denote by $\mbox{Int} ( I )$ the interior of an arc $I \subset S^1$, and for ease of notation let us set 
$x_j = X_j(R, \varphi)$.
It follows that  as $R \to + \infty$ 
\begin{itemize}
 \item for $\varphi \in \mbox{Int} ( I_1 )$, we have 
 \begin{align*}
  \frac{x_1}{x_2} & \to \infty,  & \frac{x_1}{x_3} & \to \infty, & 
  [x_0 : x_1 : x_2 : x_3] & \to [0:1:0:0]
 \end{align*}
\item for $\varphi \in \mbox{Int} (I_2 )$, we have 
 \begin{align*}
  \frac{x_2}{x_1} & \to \infty, & \frac{x_2}{x_3} & \to \infty, & 
  [x_0 : x_1 : x_2 : x_3] & \to [0:0:1:0],
 \end{align*}
 \item 
 for $\varphi \in \mbox{Int} ( I_3 )$, we have 
 \begin{align*}
  \frac{x_3}{x_1} & \to \infty, & \frac{x_3}{x_2} & \to \infty, &
  [x_0 : x_1 : x_2 : x_3] & \to [0:0:0:1],
 \end{align*} 
\end{itemize}
all convergence rates being exponential in $\sqrt{R}$. 
These limits show that 
\begin{itemize}
 \item for $\varphi \in \mbox{Int} ( I_1 )$, we have 
 $$
  \phi_1 \circ \psi (\E_{R, \varphi} ,\theta_{R, \varphi}) = 0, 
 $$
 \item for $\varphi \in \mbox{Int} (I_2 )$, we have 
 $$
  \phi_2 \circ \psi (\E_{R, \varphi} ,\theta_{R, \varphi}) = 0, 
 $$
 \item for $\varphi \in \mbox{Int} ( I_3 )$, we have 
 $$
  \phi_3 \circ \psi (\E_{R, \varphi} ,\theta_{R, \varphi}) = 0. 
 $$
\end{itemize}
Said differently, 
\begin{itemize}
 \item for $\varphi \in \mbox{Int} ( I_1 )$, we have 
 $$
  \phi \circ \psi (\E_{R, \varphi} ,\theta_{R, \varphi}) \in [v_2 v_3], 
 $$
 \item for $\varphi \in \mbox{Int} (I_2 )$, we have 
 $$
  \phi \circ \psi (\E_{R, \varphi} ,\theta_{R, \varphi}) \in [v_3 v_1], 
 $$
 \item for $\varphi \in \mbox{Int} ( I_3 )$, we have 
 $$
  \phi \circ \psi (\E_{R, \varphi} ,\theta_{R, \varphi}) \in [v_1 v_2].  
 $$
\end{itemize}
We infer that as 
$\varphi$ ranges over $[0,2\pi ]$ 
the corresponding elements 
$$
  \phi \circ \psi (\E_{R, \varphi} ,\theta_{R, \varphi}) \in {\Mod}_{\Betti}
$$ 
describe a path which is a generator of $\pi_1 (| \mathcal{N}  |) \cong \Z$. 
This finishes the proof of Theorem~\ref{thm:Simpson}.  

\section{Matching the filtrations in the Painlev\'e VI case}\label{sec:filtrations}

We keep the assumptions of Section~\ref{sec:proof}, in particular the moduli spaces we consider are the ones corresponding to the Painlev\'e VI case, and we drop the superscripts $PVI$.

 \begin{lem}\label{lem:generators}
 The diffeomorphism 
 $$
  \Mod_{\Dol} \to \Mod_{\Betti}  , 
 $$
 maps a generator of $\Gr^P_{2} H^2 (\Mod_{\Dol}, \C )$ to a generator of $\Gr^W_{4} H^2 (\Mod_{\Betti}, \C )$ . 
 \end{lem}

\begin{proof}
 We work dually in homology. 
 Denote the class of the generic Hitchin fiber by 
 $$
  [HF] = [h^{-1}(Y_{-1})] \in H_2 (\Mod_{\Dol}, \Q), 
 $$ 
 where $Y_{-1} \in Y = \C$ is a point in the Hitchin base; by~\eqref{eq:generatorP1} it is dual to a generator of $\Gr^P_{2} H^2 (\Mod_{\Dol}, \Q)$. 
 Let $U$ be an affine open neighbourhood of $[0:1:0:0]$ in $\overline{\Mod}_{\Betti}$ and 
 $$
  z_1 = r_1 e^{i\theta_1}, \quad  z_2 = r_2 e^{i\theta_2}
 $$
 coordinates on $U$ defining the two divisors $v_2, v_3$ crossing at $[0:1:0:0]$. 
 Fixing a value $R>>0$, it follows from the analysis of Section~\ref{sec:proof} that for $\varphi\in I_1$ the image of the Hitchin fiber $h^{-1} (R e^{\sqrt{-1} \varphi} )$ is contained in a 
 tubular neighbourhood of the torus in $U \cap {\Mod}_{\Betti}$ defined by 
 \begin{equation}\label{eq:Betti_torus}
    C = \{ r_1 = \varepsilon_1, r_2 = \varepsilon_2 \}
 \end{equation}
 for some small constants $0 < \varepsilon_1,  \varepsilon_2 << 1$. 
 Clearly, $U \cap {\Mod}_{\Betti}$ deformation retracts onto $C$, so we have 
 $$
  H_2 (U \cap {\Mod}_{\Betti}, \Q) \cong \Q. 
 $$
 Since $h^{-1} (R e^{\sqrt{-1} \varphi} )$ is not a boundary in $\Mod_{\Dol}$, it follows that its image in ${\Mod}_{\Betti}$ is homologous to a non-zero rational 
 multiple of the fundamental class of the torus $C$ in~\eqref{eq:Betti_torus}: 
 \begin{equation}\label{eq:generators}
  [HF] = q [C]
 \end{equation}
 for some $q \in \Q^{\times}$. 

 On the other hand, up to terms containing at most one logarithmic factor locally near $[0:1:0:0]$ a generator $[\eta ]$ of $\Gr^W_{4} H^2 (\Mod_{\Betti}^{PX}, \C )$ may be represented by a $2$-form 
 \begin{align}\label{eq:weight4form}
   \eta & = f(z_1, \bar{z}_1, z_2, \bar{z}_2 ) \frac{\mbox{d} z_1}{z_1} \wedge \frac{\mbox{d} z_2}{z_2} \\ 
   & = f \left( \frac{\mbox{d} r_1}{r_1} \wedge \frac{\mbox{d} r_2}{r_2} + i \frac{\mbox{d} r_1}{r_1} \wedge \mbox{d} \theta_2  
   + i \mbox{d} \theta_1 \wedge \frac{\mbox{d} r_2}{r_2} - \mbox{d} \theta_1 \wedge \mbox{d} \theta_2 \right) \notag
 \end{align}
 for some smooth function $f$ on $U$. 
 Now, the last term in the expression of the right-hand side of~\eqref{eq:weight4form} evaluates to $-4  \pi^2 f (0,0)$ on $C$, and the other terms vanish on it. 
 In the spectral sequence ${}_W E_2$ the term $\Ker (\delta )$ (giving rise to $W_{4} H^2 (\Mod_{\Betti}, \C )$) is generated by the triple $(1, 1 , 1)$, which means that for a generator $\eta$ we must have 
 $$
  f (0,0) \neq 0. 
 $$
 This finishes the proof.
 \end{proof}

Using the above preparatory results, we are ready to prove that the filtrations $P$ and $W$ match under non-abelian Hodge theory. 
For this purpose, by Propositions~\ref{prop:Dol},~\ref{prop:Betti} and equations~\eqref{eq:generatorP1} and~\eqref{eq:generatorW2} we need to show that the composition 
$$
  H^2 \left( \widetilde{\Mod}_{\Betti}, \C \right) \xrightarrow{i^*} H^2 (\Mod_{\Betti}, \C ) = H^2 (\Mod_{\Dol}, \C )  \xrightarrow{j^*}   H^2  \left( h^{-1}(R e^{\sqrt{-1} \varphi}), \C \right) 
$$
is the $0$-map. The dual statement in homology is that the morphism 
$$
  H_2 \left( h^{-1}(R e^{\sqrt{-1} \varphi}), \C \right) \to H_2 \left( \widetilde{\Mod}_{\Betti}, \C \right)
$$
induced by inclusion vanishes. Since $H_2 (h^{-1}(R e^{\sqrt{-1} \varphi}), \C )$ is generated by the fundamental class of $h^{-1}(R e^{\sqrt{-1} \varphi})$, it is sufficient to 
show that the image of $h^{-1}(R e^{\sqrt{-1} \varphi})$ is a $2$-boundary in $\widetilde{\Mod}_{\Betti}$. By~\eqref{eq:generators}, it is sufficient to show that $C$ is a $2$-boundary in $\widetilde{\Mod}_{\Betti}$. 
This latter assertion is easy to show: with the notations of Lemma~\ref{lem:generators} we have 
$$
  C = \partial \left( \{ r_1 = \varepsilon \} \times  \{ r_2 \leq \varepsilon \} \right). 
$$


\end{document}